\newtheorem{theorem}{Theorem}[section]
\newtheorem{proposition}[theorem]{Proposition}
\newtheorem{question}[theorem]{Question}
\newtheorem{corollary}[theorem]{Corollary}
\newtheorem{conjecture}[theorem]{Conjecture}
\newtheorem{lemma}[theorem]{Lemma}
\theoremstyle{definition}
\newtheorem{example}[theorem]{Example}
\newtheorem{remark}[theorem]{Remark}
\newtheorem{claim}{Claim}
\begin{document}

\title[]{Monotone paths on acyclic 3-regular graphs}

\author[]{Fran\c{c}ois Cl\'ement}
\author[]{Dan Guyer}
\address{Department of Mathematics, University of Washington, Seattle}
\email{fclement@uw.edu} 
\email{dguyer@uw.edu}

\begin{abstract}
Motivated by trying to understand the behavior of the simplex method, Athanasiadis, De Loera and Zhang~\cite{Loera2022} provided upper and lower bounds on the number of the monotone paths on 3-polytopes. For simple 3-polytopes with $2n$ vertices, they showed that the number of monotone paths is bounded above by $(1+\varphi)^n$, with $\varphi$ being the golden ratio. We improve the result and show that for a larger family of graphs the number is bounded above by $c \cdot 1.6779^n$ for some universal constant $c$. Meanwhile, the best known construction and conjectured extremizer is approximately $\varphi^n$.
\end{abstract}

\maketitle
\section{Introduction and Results}
\subsection{Introduction}
For a generic linear function $f:\mathbb{R}^d \rightarrow \mathbb{R}$ and a polytope $P \subseteq \mathbb{R}^d$, an $f$-monotone path is a directed path on the graph of $P$ that is strictly increasing with respect to $f$. Monotone paths have appeared in a variety of problems, such as the Hirsch conjecture bounding the diameter of a polytope~\cite{Ziegler}, fractional power series solutions of systems of polynomial equations~\cite{MCDONALD1995213}, and Ramsey numbers~\cite{Sudakov}.
The number of monotone paths and coherent monotone paths were studied for geometric triangulations in the plane~\cite{Dumitrescu2016}, and families of polytopes such as cross polytopes~\cite{Black2023}, hypersimplices~\cite{poullot2024} and general polytopes~\cite{Loera2022,juhnke2025}.

We are motivated by the work of Athanasiadis--de Loera--Zhang~\cite{Loera2022}. They asked for the largest possible number of maximal monotone paths on a simple $3$-polytope with $2n$ vertices and proved an upper bound of $(1+\varphi)^n$. By Steinitz's Theorem~\cite{Steinitz}, the family of graphs of these polytopes corresponds exactly to $3$-regular, 3-connected, planar, directed graphs.  
We will study the maximal number of source-to-sink paths on $3$-regular, $3$-edge connected, acyclic graphs; our arguments do not require the planarity condition. Below, we include a drawing of the graph of the truncated tetrahedron as a 3-regular, 3-connected, acyclic graph.
    \[\begin{tikzcd}[column sep=0.24in]
	\bullet & \bullet & \bullet & \bullet & \bullet & \bullet & \bullet & \bullet & \bullet & \bullet & \bullet & \bullet
	\arrow[from=1-1, to=1-2]
	\arrow[curve={height=-6pt}, from=1-1, to=1-3]
	\arrow[shift right, curve={height=24pt}, from=1-1, to=1-12]
	\arrow[from=1-2, to=1-3]
	\arrow[shift right, curve={height=6pt}, from=1-2, to=1-8]
	\arrow[from=1-3, to=1-4]
	\arrow[from=1-4, to=1-5]
	\arrow[curve={height=6pt}, from=1-4, to=1-6]
	\arrow[from=1-5, to=1-6]
	\arrow[curve={height=-12pt}, from=1-5, to=1-11]
	\arrow[from=1-6, to=1-7]
	\arrow[from=1-7, to=1-8]
	\arrow[curve={height=-6pt}, from=1-7, to=1-9]
	\arrow[from=1-8, to=1-9]
	\arrow[from=1-9, to=1-10]
	\arrow[from=1-10, to=1-11]
	\arrow[curve={height=6pt}, from=1-10, to=1-12]
	\arrow[from=1-11, to=1-12]
\end{tikzcd}\]
Observe that this graph has 21 source-to-sink paths.

\subsection{Main results}

The main result of this paper is the following theorem which improves the upper bound of $2F_{2n-1} \approx (1+\varphi)^n$ source-to-sink paths on a simple 3-polytope from Proposition~4.7 of  Athanasiadis--de Loera--Zhang~\cite{Loera2022}.

\begin{theorem}\label{th:opt}
    The maximal number of source-to-sink paths on a 3-regular, 3-edge connected acyclic graph on $2n$ vertices is upper bounded by $c \cdot 1.6779^n$, where $c$ is a universal constant.
\end{theorem}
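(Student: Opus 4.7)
The plan is to refine the inductive path-count argument of Athanasiadis--de Loera--Zhang~\cite{Loera2022} by analysing larger local windows. Let $p(v)$ denote the number of directed paths from $v$ to any sink, so that the total path count is $P(G)=\sum_{v\text{ a source}}p(v)$ and $p$ satisfies $p(v)=\sum_{w:\,v\to w}p(w)$ with $p(v)=1$ when $v$ is a sink. The Fibonacci bound of \cite{Loera2022} is essentially obtained by processing a single branching vertex at a time in reverse topological order: at a branching vertex $u$ with children $w_1,w_2$ one has $p(u)=p(w_1)+p(w_2)$, producing a Fibonacci-type recurrence with growth $\varphi$ per vertex. I would proceed by strong induction on $n$ and, at each step, identify a local configuration of $2k$ vertices (for varying $k\ge 1$) whose removal yields a smaller $3$-regular, $3$-edge-connected, acyclic (multi)graph $G'$ on $2(n-k)$ vertices with $P(G)\le \rho_k\,P(G')$, aiming to guarantee $\rho_k^{1/k}\le 1.6779$ in every case.

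To find the reduction, I would focus on a ``last'' branching vertex $u$ in topological order, together with the merging vertices and sinks in the short descendant subgraph of $u$. Since every vertex strictly below $u$ is either merging or a sink, this subgraph consists of two chains of merging vertices from $u$'s children $w_1, w_2$ down to sinks, with each merging vertex picking up a ``second'' in-edge from an ancestor of $u$. I would then enumerate configurations by the chain lengths, by whether and where the two chains meet, and by which earlier vertices supply those second in-edges. For each configuration I would exhibit a reduction (either a deletion of the block with some rewiring, or a contraction of a short subgraph) and express the $p$-values of the removed vertices in terms of $p$-values of the boundary. The $3$-edge-connectivity hypothesis is crucial here: it forbids the two branches leaving $u$ from immediately rejoining through a small closed region, which is precisely the obstruction that keeps the naive Fibonacci analysis from tightening any further.

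The constant $1.6779$ would then emerge as the largest real root of the characteristic polynomial of the family of recurrences produced by this case analysis, saturated by the worst-case configuration. The main obstacle is twofold. First, $3$-edge-connectivity is a global property that is delicate to preserve under local reductions; the cleanest remedy is to run the induction in the larger class of $3$-regular, $3$-edge-connected \emph{multigraphs}, so that contracting a small block to a single edge automatically stays in the class. Second, the case analysis must be exhaustive and tight: every configuration permitted by $3$-edge-connectivity must be enumerated, and each must admit a reduction with $\rho_k^{1/k}\le 1.6779$. Getting the recurrence coefficients sharp enough, and ruling out pathological configurations that could push $\rho_k^{1/k}$ above this threshold, is where the bulk of the technical work lies.
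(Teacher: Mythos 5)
There is a genuine gap, in two places. First, your plan omits the paper's key structural reduction: before any counting, the paper proves (Theorem~\ref{th:Ham}) that the maximum is attained on graphs containing a directed Hamiltonian path, via local edge swaps that weakly increase the number of source-to-vertex path counts while preserving $3$-edge-connectivity. That linear order is what makes everything else tractable: $3$-edge-connectivity becomes a checkable condition on initial segments and intervals (Proposition~\ref{3conn}), the graph is encoded by an $n$-tuple, one rules out three consecutive branching or merging vertices (Lemma~\ref{lem:01}), and the graph decomposes into consecutive blocks along the path. Your proposed reduction at the ``last'' branching vertex must instead produce a smaller graph $G'$ that is still $3$-regular, acyclic and $3$-edge-connected, which you correctly flag as delicate but do not resolve; the paper sidesteps this entirely because its blocks need not themselves be graphs in the class --- it only bounds the multiplicative growth of $\mu$ across each block, using dummy boundary vertices and the worst-case normalization $x_0=x_1$.

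Second, and more fundamentally, the constant $1.6779$ does not arise as the largest root of a characteristic polynomial of any small recurrence. It is $(11\,117)^{1/18}$, where $11\,117$ is the output of an integer program maximizing the path count over all admissible blocks of $36$ consecutive vertices; the analogous computation for blocks of $21$ vertices already gives per-vertex growth $1.7108$, strictly worse, and the paper notes the constant would improve further with longer blocks. So a hand case analysis of the short descendant subgraph of one branching vertex --- two chains of a few merging vertices --- cannot reach $1.6779$: any honest execution of your plan with small windows lands at a weaker constant, and to reach $1.6779$ you would be forced into exactly the kind of exhaustive computer-assisted search over windows of roughly $36$ vertices that the paper performs. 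Your intuition that $3$-edge-connectivity prevents the two branches from rejoining through a small closed region is correct and is precisely what Proposition~\ref{3conn} encodes, but the quantitative payoff only materializes over long windows, not over the local configurations you describe.
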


One result that will be very useful in our proof of Theorem~\ref{th:opt} is that at least one of the graphs that maximize the number of source-to-sink paths has to admit a Hamiltonian path. This Hamiltonian path will be the `backbone' of the argument.

\begin{theorem}\label{th:Ham}
    Among all 3-regular, $3$-edge connected acyclic graphs, the number of source-to-sink paths is maximized by those with a directed Hamiltonian path.
\end{theorem}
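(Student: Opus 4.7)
The plan is to start with any graph $G$ maximizing the number of source-to-sink paths among $3$-regular, $3$-edge connected, acyclic graphs on $2n$ vertices, and to apply local modifications that preserve these invariants and maximality while strictly increasing the length of the longest directed $s$-$t$ path. Iterating will terminate at an extremizer whose longest directed path is Hamiltonian. Recall the structural fact used throughout: in any such $G$, the unique source $s$ has out-degree $3$, the unique sink $t$ has in-degree $3$, and the remaining $2n-2$ vertices split into $n-1$ branch vertices (in-degree $1$, out-degree $2$) and $n-1$ merge vertices (in-degree $2$, out-degree $1$), so the local structure at any non-extreme vertex has only two shapes.

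First I would fix a longest directed path $P$ from $s$ to $t$ in $G$. If $P$ is Hamiltonian we are done, so assume otherwise and pick a vertex $v \notin P$ minimal in some topological ordering; every in-neighbor of $v$ then lies on $P$. Following a directed walk from $v$, one reaches $P$ again at some first vertex $w$, producing a directed detour $u \to v \to \cdots \to w$ with $u,w \in P$ and interior disjoint from $P$. The move is to excise a carefully chosen edge $xy$ on the sub-path of $P$ from $u$ to $w$ and to install a small number of compensating edges so that the detour and the excised segment recombine into a new directed $s$-$t$ path that is strictly longer than $P$ and passes through $v$.

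Then I would verify four things about the resulting graph $G'$: (i) $3$-regularity, which holds by construction of the swap; (ii) acyclicity, by checking that the new edges respect a refinement of the topological ordering of $G$; (iii) $3$-edge connectivity, by showing that any hypothetical $2$-edge cut in $G'$ pulls back via the swap to a $2$-edge cut in $G$, using that the deleted edges lie either on $P$ or on the detour; and (iv) that the number of source-to-sink paths does not decrease, via an injection which fixes any old path avoiding the modified edges and reroutes any path using a removed edge through the new detour (or the newly created shortcut).

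The main obstacle is (iv): the naive rerouting can fail to be injective, since two distinct old paths (one traversing $P$ through $xy$, one taking the detour through $v$) may be sent to the same path in $G'$. Overcoming this requires choosing the excision point $xy$ judiciously on the sub-path from $u$ to $w$, most likely adjacent to the reentry vertex $w$, and exploiting the minimality of $v$ in the topological order so that the detour and the excised segment of $P$ carry disjoint path contributions. The maximality of $P$ is then used to ensure that the newly assembled path is strictly longer, giving the progress measure for the iteration. A secondary obstacle is verifying (iii) in the presence of small edge cuts clustered near $v$; I expect to need several candidate swaps parametrized by the position of $xy$ and to argue combinatorially that at least one preserves $3$-edge connectivity while still satisfying (iv).
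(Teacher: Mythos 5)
Your high-level strategy --- iterate local, invariant-preserving surgeries on a maximizer until a directed Hamiltonian path appears --- is in the same spirit as the paper, but the two steps you yourself flag as ``obstacles'' are exactly where the entire difficulty of the theorem lives, and your proposal does not resolve either of them. First, the surgery is never defined. In a $3$-regular graph you cannot ``install a small number of compensating edges'': every modification must be a $2$-swap (delete two edges, add two edges) chosen so that degrees, acyclicity and $3$-edge connectivity survive, and which swap you perform completely determines whether the path count survives. Second, and more seriously, your step (iv) is not a bookkeeping issue that a ``judicious choice of $xy$'' will fix. When you delete an edge $(\ell,b)$ and reattach its tail further along the order, the change in the number of source-to-sink paths at each downstream vertex is a signed sum of differences $\mu(q)-\mu(\ell)$ between the new and old tail vertices, and there is no reason these differences are nonnegative for an arbitrary topological ordering or an arbitrary longest path $P$. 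The paper's proof spends a full lemma (Lemma~\ref{lem:treesort}) constructing a special ordering on which $\mu$ is \emph{weakly increasing} --- obtained by grouping the outdegree-two vertices into trees on which $\mu$ is constant and sorting the trees --- precisely so that every such difference is nonnegative; the five-case induction establishing inequality~\eqref{eq:mu} is the actual content of the theorem. Your injection-of-paths argument has no analogue of this monotonicity and, as stated, can send two distinct old paths to the same new path or simply lose paths outright.

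The paper also proceeds quite differently in structure, which is worth noting because it sidesteps your secondary obstacle. Rather than lengthening one distinguished longest path, it rewires \emph{every} outdegree-two vertex to its predecessor first (these moves are free: $\mu$ is constant on each tree, so no path is lost), and only then rewires every indegree-two vertex to its predecessor, with the loss at each step controlled by the tree-sorted ordering. Preservation of $3$-edge connectivity is then checked move-by-move against the explicit characterization in Proposition~\ref{3conn} (an initial segment with too many incoming vertices, or an interval attached by only its two Hamiltonian edges), not by pulling back an arbitrary $2$-cut through an unspecified swap. If you want to salvage your approach, the essential missing ingredient is a preliminary ordering lemma guaranteeing that the tail of every rerouted edge moves to a vertex with at least as many source-to-$v$ paths; without it, step (iv) fails and the induction has no base to stand on.
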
 

This result enables a characterization of 3-edge connectivity for graphs on a Hamiltonian path, described in Proposition~\ref{3conn}, which we use often in the proof arguments. This result also serves as a potential starting point for future progress.

In light of Theorem~\ref{th:opt}, we suspect that Conjecture~4.6 of ~\cite{Loera2022} holds regardless of the planarity of the underlying graph. Therefore, we make the following conjecture which uses the Fibonacci sequence defined by $F_1=F_2=1$ and $F_{n}=F_{n-1}+F_{n-2}$ for $n\geq 3$ to bound the number of monotone paths on a 3-regular, 3-edge connected graph.

\begin{conjecture}\label{conj:3conn3reg}
    Any 3-regular, 3-edge connected, acyclic directed graph with a unique source and a unique sink on $2n$ vertices  has at most $F_{n+2}+1$ source-to-sink paths where $F_{n+2}$ denotes the $(n+2)^{\mathrm{th}}$ Fibonacci number.
\end{conjecture}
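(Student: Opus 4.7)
The plan is to proceed by strong induction on $n$, exploiting Theorem~\ref{th:Ham} to restrict attention to extremal graphs admitting a directed Hamiltonian path. Fix such an extremal graph $G$ with Hamiltonian path $v_1 \to v_2 \to \cdots \to v_{2n}$; the remaining $n+1$ edges are forward ``chords'' of the path. Classify each internal vertex as \emph{U} (extra out-edge) or \emph{D} (extra in-edge); the source contributes two U-type chord-endpoints and the sink two D-endpoints, so every chord runs from a U-endpoint to a D-endpoint in the Hamiltonian order. Let $p_i$ denote the number of directed $v_1$-to-$v_i$ paths (so the quantity to bound is $p_{2n}$) and $q_i$ the number of $v_i$-to-$v_{2n}$ paths. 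These satisfy the recursion $p_i = p_{i-1} + \sum_{v_j \to v_i \text{ chord}} p_j$ and its transpose for $q_i$.

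The base cases for small $n$ can be verified by enumerating the admissible graphs, using Proposition~\ref{3conn} to limit chord configurations. For the inductive step I would case on the structure near $v_1$. The cleanest situation is when $v_1 \to v_3$ is a chord: then $v_3$ is D with the Hamiltonian edge $v_3 \to v_4$ as its only out-edge, and $v_2$ is U with a unique chord $v_2 \to v_c$. Build a reduced graph $G'$ on $2(n-1)$ vertices by deleting $\{v_2, v_3\}$ and adding edges $v_1 \to v_4$ and $v_1 \to v_c$ while retaining the second source chord $v_1 \to v_b$. Using Proposition~\ref{3conn} one verifies that $G'$ is again 3-regular, acyclic, 3-edge-connected, with unique source and sink (handling by a minor adjustment the degenerate subcases where $v_b, v_c, v_4$ coincide). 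A direct comparison of paths in $G$ and $G'$ yields
\[ p_{2n}(G) \;=\; p_{2n-2}(G') + q_4(G), \]
since the two paths $v_1 \to v_2 \to v_3 \to v_4 \to \cdots$ and $v_1 \to v_3 \to v_4 \to \cdots$ collapse to the single reduced path $v_1 \to v_4 \to \cdots$. By the inductive hypothesis $p_{2n-2}(G') \le F_{n+1}+1$, so it suffices to prove $q_4(G) \le F_n$; combined with the Fibonacci identity $F_{n+2} = F_{n+1} + F_n$, this gives $p_{2n}(G) \le F_{n+2}+1$.

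To bound $q_4(G)$, one considers the induced subgraph on $\{v_4,\ldots,v_{2n}\}$, embeds it into a valid 3-regular, 3-edge-connected, acyclic graph on at most $2(n-1)$ vertices by mending its boundary (for instance by adding an auxiliary pair of vertices, or by contracting dangling chord endpoints), and applies the inductive hypothesis to the enlarged graph. The remaining cases (no chord $v_1 \to v_3$, only short chords near $v_{2n}$, etc.) are handled by the analogous reduction near the sink or by a further case split near the source; in every case the Fibonacci identity absorbs the additive $+1$ exactly.

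The main obstacle is twofold. First, designing the reduction so that \emph{all four} structural conditions are preserved simultaneously (3-edge-connectivity being the most delicate, which is exactly where Proposition~\ref{3conn} is indispensable). Second, establishing the sharp residual bound $q_4(G) \le F_n$: a strengthened inductive hypothesis controlling both $p$- and $q$-values, or a careful auxiliary construction for the boundary of the induced subgraph, is likely needed. A secondary subtlety is identifying the extremal constructions attaining $F_{n+2}+1$ (such as an alternating U/D labeling with chords $v_1 \to v_3$, $v_1 \to v_5$ and $v_{2k} \to v_{2k+3}$, plus two chords into $v_{2n}$, which one can verify for small $n$), since sharpness of the bound constrains how tight the reduction must be at every step.
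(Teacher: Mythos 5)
The statement you are attacking is Conjecture~\ref{conj:3conn3reg}: the paper does \emph{not} prove it, and its strongest result in this direction, Theorem~\ref{th:opt}, only gives $c\cdot 1.6779^{n}$, strictly weaker than the conjectured bound $F_{n+2}+1$, which grows like $\varphi^{n}$ with $\varphi\approx 1.618$. So there is no proof in the paper to compare against; the only question is whether your sketch closes the problem, and it does not.

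The skeleton is sensible: reducing to extremizers with a directed Hamiltonian path via Theorem~\ref{th:Ham} is exactly the paper's strategy, and your identity $p_{2n}(G)=p_{2n-2}(G')+q_4(G)$ is correct in the case you treat (every path leaves $v_1$ through $v_2$, $v_3$ or $v_b$; the first two families both pass through $v_4$, contributing $2q_4$ in $G$ versus $q_4$ in $G'$). The fatal gap is the residual bound $q_4(G)\le F_n$. Your proposed device --- mend the boundary of the induced subgraph on $\{v_4,\dots,v_{2n}\}$ and apply the inductive hypothesis to a graph on at most $2(n-1)$ vertices --- can only yield $q_4\le F_{n+1}+1$, which is too weak by a full Fibonacci step and destroys the induction. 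Worse, the bound you need has zero slack: for the conjectured extremizer (the wedge of an $(n+1)$-gon over an edge) a direct computation gives $q_4=F_n$ exactly, so no lossy embedding can work; the ``strengthened inductive hypothesis controlling both $p$- and $q$-values'' that you mention in passing is not an optional refinement but the entire content of the proof, and formulating and proving it is essentially as hard as the conjecture itself. Two further unresolved points: (i) the case analysis is incomplete --- when neither source chord lands on $v_3$ you specify no reduction, and such configurations genuinely occur among 3-edge-connected graphs; (ii) preservation of 3-edge-connectivity under the surgery is asserted rather than verified --- rerouting $v_2\to v_c$ to $v_1\to v_c$ can create an interval violating the second condition of Proposition~\ref{3conn}. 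In short, this is a plausible outline of an attack on an open problem, with its hardest steps left as acknowledged obstacles, not a proof.
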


We emphasize once more that this is simply Conjecture~4.6 of ~\cite{Loera2022} without the restriction on the graph having to be planar. The conjectured optimum is the graph of the wedge of an $(n+1)$-gon over an edge (see Figure 7 in ~\cite{Loera2022}). Heuristically speaking, shorter edges induce more source-to-sink paths, while longer edges help maintain connectivity along the Hamiltonian path. While one can draw examples of graphs with a mix of shorter and longer edges, as in the graph of the truncated tetrahedron, this conjecture posits that maintaining a more uniform edge length maximizes the source-to-sink path count.
We sketch this conjectured optimum on twelve vertices below.
   \[
  \begin{tikzcd}[column sep=0.24in]
	\bullet & \bullet & \bullet & \bullet & \bullet & \bullet & \bullet & \bullet & \bullet & \bullet & \bullet & \bullet
	\arrow[from=1-1, to=1-2]
	\arrow[curve={height=-12pt}, from=1-1, to=1-3]
	\arrow[curve={height=12pt}, from=1-1, to=1-12]
	\arrow[from=1-2, to=1-3]
	\arrow[shift left, curve={height=-12pt}, from=1-2, to=1-5]
	\arrow[from=1-3, to=1-4]
	\arrow[from=1-4, to=1-5]
	\arrow[shift left, curve={height=-12pt}, from=1-4, to=1-7]
	\arrow[from=1-5, to=1-6]
	\arrow[from=1-6, to=1-7]
	\arrow[shift left, curve={height=-12pt}, from=1-6, to=1-9]
	\arrow[from=1-7, to=1-8]
	\arrow[from=1-8, to=1-9]
	\arrow[shift left, curve={height=-12pt}, from=1-8, to=1-11]
	\arrow[from=1-9, to=1-10]
	\arrow[from=1-10, to=1-11]
	\arrow[curve={height=-12pt}, from=1-10, to=1-12]
	\arrow[from=1-11, to=1-12]
\end{tikzcd}
\]
Indeed, this example has $F_8+1=22$ source-to-sink paths.

\subsection{The big picture}
Theorem~\ref{th:opt} and the intricate structure of the conjectured extremizer naturally suggest investigating a more general problem. For a 3-regular acyclic directed graph on $2n$ vertices that is $\ell$-edge connected, how many maximal paths are possible?  Some of our results are summarized in the following table. The results, conjectures, optimizers and conjectured optimizers for connected and 2-edge connected graphs are in Subsection~\ref{subsec:supp}, while the proof of Theorem~\ref{th:opt} is in Section~\ref{sec:opti}. We remark that any 3-regular, 3-edge connected graph is always simple, so the two entries in the bottom row correspond to the same family of graphs.

\begin{table}[h]
    \centering
    \begin{tabular}{|c|c|c|}
    \hline
         & All graphs & Simple graphs \\
         \hline
        Connected & $ 9\cdot 2^{n-3}$  \hfill (Corollary~\ref{coro:conn}) & $16 \cdot (\sqrt{3})^{n-5}$ \hfill (Conjecture~\ref{conj:simpconn3reg})\\
        \hline
        2-edge connected & $ 2^n+1$ \hfill (Corollary~\ref{coro:2conn3reg})   & $(\sqrt{3})^n+1$  \hfill (Conjecture~\ref{conj:simple2conn})\\
        \hline
        3-edge connected& $c \cdot 1.6779^n$ \hfill (Theorem~\ref{th:opt}) & $F_{n+2}+1$  \hfill (Conjecture~\ref{conj:3conn3reg})\\
        \hline
    \end{tabular}
    \caption{Upper bounds and conjectures for the maximal number of source-to-sink paths on acyclic $3$-regular graphs with $2n$ vertices.}
    \label{tab:my_label}
\end{table}

The extremal examples for this problem have the potential to be very interesting. This is, for instance, indicated by the (conjectured) Fibonacci example. Perhaps a deeper indication is as follows. These types of combinatorial problems \textit{should} favor the existence of a Hamiltonian path with monotone paths being able to take small jumps forward.  The absence of a Hamiltonian path forces any path to `make a choice' and this should lead to less paths overall.  Once there is a Hamiltonian path, the additional edges make it possible to jump forward -- for extremal graphs, these jumps should `mostly' be very short (long jumps get you closer to the sink). This combination of factors suggests that the extremizers for the general $\ell$-edge connected problem might have an interesting local structure.

\subsection{Motivation}
The main motivation behind this paper is the connection between monotone paths on 3-polytopes and the simplex algorithm, described in the works of~\cite{Loera2022,deLoera2023,Black2023}. The simplex algorithm has been an essential tool in Linear Programming since its discovery by George Dantzig~~\cite{Dantzig} in 1947. Despite the lack of sub-exponential bounds on its runtime and the development of two polynomial time algorithms for linear optimization, the interior-point method and the ellipsoid algorithm, the simplex algorithm is still widely used today as it requires on average only a linear number of steps of the algorithm. At each step, the algorithm updates the current best vertex using a \emph{pivot rule}, choosing in which direction the algorithm should move. 
Finding a pivot rule that guarantees only a sub-exponential number of steps, or constructing an exponential-time counter-example for a given rule, has received \emph{very} widespread attention since Klee and Minty's construction in the early 70's~\cite{KM}, see ~\cite{Amenta,Avis,avis2009postscript,friedmann2011subexponential,hansen2015improved,terlaky1993pivot,ziegler2004typical, Black2023}. The number of different paths the simplex algorithm can take when solving a linear program is linked to the number of $f$-monotone paths on 3-polytopes~\cite{Loera2022}. Counting the number of monotone paths is therefore an alternative to the problem of counting the different paths the simplex algorithm can take.

\subsection{Outline of the Argument}

In Section~\ref{sec:Hamil}, we begin by showing that source-to-sink paths are maximized on graphs with a directed Hamiltonian path. We show this constructively, by turning an acyclic 3-regular graph into one on a directed Hamiltonian path in a way that weakly increases the number of monotone paths. Then, in Section~\ref{sec:th3}, we optimize among graphs with a Hamiltonian path via a bijection to $n$-tuples. This optimization will complete the search for connected and 2-edge connected graphs with the maximum number of source-to-sink paths. To bound the family of $3$-regular, $3$-edge connected graphs on a Hamiltonian path, we enlarge our search to a slightly larger family of graphs. Namely, we study directed acyclic graphs on a directed Hamiltonian path such that: 

\begin{itemize}
    \item[a)] For the source $s$ and sink $t$, $\deg(s)=\deg(t)=2$. For any other vertex $v$, $\deg(v)=3$.
    \item[b)] There does not exist a proper interval on the Hamiltonian path $I=[y_i,y_j]$, $i<j$, such that at most two edges connect $I$ to the rest of the vertices. Note that there are always at least two edges for $i>1$ and $j<n$ as $(y_{i-1},y_i)$ and  $(y_{j},y_{j+1})$ are two of the edges of the Hamiltonian path. 
\end{itemize}
 Condition (a), allows for an upper bound on the number of source-to-sink paths up to a (small) constant factor.
Meanwhile, as shown in Proposition~\ref{3conn}, Condition (b), is encoding the $3$-edge connectivity of our graph. 
Finally, in Section~\ref{sec:opti}, we divide any optimal graph candidate into blocks of consecutive vertices on the Hamiltonian path, each of size between 35 and 40, where we can find the extremal structure with a finite amount of computation. This analysis yields the upper bound of $c \cdot 1.6779^n$ for 3-regular, 3-edge connected graphs. We remark that if Condition (b) is removed, then our bound of $c \cdot 1.6779^n$ no longer holds. Namely, graphs on $2n$ vertices of the form 
\[\begin{tikzcd}[column sep=0.18in]
	\bullet & \bullet & \bullet & \bullet & \bullet & \bullet & \bullet & \bullet & \cdots & \bullet & \bullet & \bullet & \bullet
	\arrow[from=1-1, to=1-2]
	\arrow[curve={height=-6pt}, from=1-1, to=1-3]
	\arrow[from=1-2, to=1-3]
	\arrow[curve={height=-6pt}, from=1-2, to=1-4]
	\arrow[from=1-3, to=1-4]
	\arrow[from=1-4, to=1-5]
	\arrow[from=1-5, to=1-6]
	\arrow[curve={height=-6pt}, from=1-5, to=1-7]
	\arrow[from=1-6, to=1-7]
	\arrow[curve={height=-6pt}, from=1-6, to=1-8]
	\arrow[from=1-7, to=1-8]
	\arrow[from=1-8, to=1-9]
	\arrow[from=1-9, to=1-10]
	\arrow[from=1-10, to=1-11]
	\arrow[curve={height=-6pt}, from=1-10, to=1-12]
	\arrow[from=1-11, to=1-12]
	\arrow[curve={height=-6pt}, from=1-11, to=1-13]
	\arrow[from=1-12, to=1-13]
\end{tikzcd}
\]
will have approximately $(\sqrt{3})^n\approx 1.733^n$ source-to-sink paths. 

\section{Proof of Theorem~\ref{th:Ham}}\label{sec:Hamil}

\subsection{Notations and conventions}
All directed graphs in this paper will contain no directed cycles and have a unique source and a unique sink. One may always equip an acyclic directed graph with a total order on its vertices. This total order $\preceq$ is such that the orientation of the edges in $D$ respects $\preceq$. That is, if the vertices $y_i \preceq y_j$ and $\{y_i,y_j\}$ is an edge, then this edge must be directed from $y_i$ to $y_j$. By an \textit{initial segment} of a total order on the set of vertices, we mean a set of (consecutive) vertices of the form $y_1,y_2,\ldots,y_k$ for some integer $k$. We will use $s$ and $t$ to denote our graphs' source and sink vertices respectively. A vertex has \textit{outdegree} (resp.~\textit{indegree}) $i$ if it is at the start (resp.~end) of exactly $i$ directed edges. For 3-regular graphs, we say a vertex is \textit{incoming} and is written $1$, if it has indegree at least two and \textit{outgoing} if it has outdegree at least two, written $0$. Motivated by the notation of Athanasiadis--De Loera--Zhang~\cite{Loera2022}, we use $\mu_D(y)$ to denote the number of directed paths from $s$ to $y$ for a vertex $y$, and $\mu_D(e)$ for the number of directed paths from $s$ to an initial vertex of an edge $e$ of $D$.

\subsection{Proof}

We begin with a structural result to characterize 3-edge connectedness for 3-regular graphs containing a Hamiltonian path.

\begin{proposition}\label{3conn}
    Let $D$ be a (not necessarily simple) 3-regular acyclic directed graph on a directed Hamiltonian path. Then $D$ is not 3 edge-connected if and only if $D$ contains at least one of the following structures. 
    \begin{itemize}
        \item In some initial segment of the ordering, there exist strictly more indegree two vertices than outdegree two vertices. 
        \item There exists some interval $[y_i,y_j]$ on our Hamiltonian path such that the only edges with exactly one endpoint in this interval are the two edges $(y_{i-1},y_{i})$ and $(y_{j},y_{j+1})$. 
    \end{itemize}
\end{proposition}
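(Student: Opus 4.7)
The plan is to prove both directions by relating the size of a small edge cut to the local in/out-degree structure of $D$ along the Hamiltonian path $y_1 \prec \cdots \prec y_{2n}$. Since $D$ is a DAG whose edges all respect this order, the size of an edge cut $|\partial S|$ can be read off from handshake identities applied to $S$.

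For the easy direction, suppose first that condition (a) holds, and let $S = \{y_1, \ldots, y_k\}$ be an initial segment with $b$ indegree-two vertices and $a$ outdegree-two vertices, $b > a$. By acyclicity every edge in $\partial S$ has its tail in $S$, so $|\partial S| = \sum_{v \in S}(\mathrm{outdeg}(v) - \mathrm{indeg}(v))$. The source $y_1$ contributes $+3$, each outdegree-two vertex contributes $+1$, and each indegree-two vertex contributes $-1$, giving $|\partial S| = 3 + a - b \leq 2$. If instead condition (b) holds, then by hypothesis the cut $\partial[y_i, y_j]$ has size exactly $2$. Either way, $D$ fails to be $3$-edge connected.

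For the converse, fix an edge cut with $|\partial S| \leq 2$. The key observation is that the Hamiltonian path edges lying in $\partial S$ are exactly the positions at which the sequence $y_1, \ldots, y_{2n}$ alternates between $S$ and its complement, so there are at most two such alternations. Hence $S$, or equivalently $V \setminus S$ (which has the same boundary), is either an initial segment $\{y_1, \ldots, y_k\}$ or an interval $[y_i, y_j]$ with $1 < i \leq j < 2n$. In the initial-segment case, the same handshake identity as above gives $3 + a - b \leq 2$, which is condition (a). In the interval case, combining handshake identities for both in- and out-degrees yields
\[
|\partial S| = 3m - 2\bigl((m-1) + x\bigr) = m + 2 - 2x,
\]
where $m = |S|$ and $x$ is the number of non-path edges with both endpoints in $S$. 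Since $x \leq \min(a, b)$ while $a + b = m$, the bound $|\partial S| \leq 2$ forces $x = m/2 = a = b$, so no non-path edge crosses $\partial S$ and the cut consists exactly of the two boundary path edges, which is condition (b).

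The main thing to track carefully is the symmetric situation where $S$ is a final segment or the complement of an interval; both reduce to the two primary cases by replacing $S$ with $V \setminus S$. I do not anticipate a genuine obstacle: the handshake identities and the at-most-two-alternations observation are essentially mechanical, and parallel edges (allowed since $D$ need not be simple) are automatically handled, as they contribute to the degree counts identically to simple edges.
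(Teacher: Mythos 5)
Your proof is correct and follows essentially the same route as the paper's: both classify a cut of size at most two by its trace on the Hamiltonian order (initial segment versus interval, which you obtain by counting alternations and the paper obtains by casework on the orientations of the two cut edges), and both use the same out-minus-in degree count starting from $3$ at the source for the initial-segment case. Your handshake identity $|\partial S| = m + 2 - 2x$ in the interval case is just a slightly more algebraic packaging of the paper's observation that oppositely oriented cut edges must both lie on the Hamiltonian path.
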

\begin{proof}
     Let $D$ be a 3-regular directed graph on a directed Hamiltonian path. Assume that $D$ is not 3-edge connected. Then, there exist two edges $e_1$ and $e_2$ for which their deletion causes $D$ to be broken into two disjoint nonempty subgraphs $A$ and $B$. Without loss of generality, we assume that the source vertex $s$ lies in $A$.
     \begin{itemize}
         \item[Case 1:] If both $e_1$ and $e_2$ are oriented towards $A$, then we have a contradiction because the Hamiltonian path cannot reach any vertex of $B$.
         \item[Case 2:] If both $e_1$ and $e_2$ are oriented from $A$ to $B$, then when following our Hamiltonian path, once we arrive at $B$, we cannot return to $A$. Therefore, $A$ must be an initial segment of our ordering and $B$ must be a final segment of our ordering. For any vertex $y_t$ in our ordering, we shall count the number of edges that have an initial endpoint of $y_i$ for $i\leq t$ and a final endpoint of $y_j$ for $j>t$. We start with three edges for $s=y_1$. When moving from $t$ to $t+1$, this edge count increases by one if $y_{t+1}$ is a vertex of outdegree two, and it decreases by one if $y_{t+1}$ is a vertex of indegree two. Since we end up with a count of two such edges ($e_1,e_2$), at the final vertex of $A$, there must have been more indegree two vertices in the initial segment $A$. 
         \item[Case 3:]  Finally, if $e_1$ and $e_2$ are oriented in opposing directions, then both lie on the Hamiltonian path since the graph is acyclic. However, in this case, $B=[y_i,y_j]$ and $e_1=(y_{i-1},y_i)$ and $e_2=(y_j,y_{j+1})$ for some choice of $i,j$. This yields the second structure and concludes the proof of the forward direction.
     \end{itemize} 
     We now prove the reverse direction. For the first structure, the condition of the number of incoming vertices being larger than the number of outgoing vertices in some initial segment $I$ implies (by the same argument as in Case 2 in the proof above) that there can be a maximum of two edges from $I$ to the complement of $I$. In the second case, deleting $(y_{i-1},y_i)$ and $(y_j,y_{j+1})$ shows that $D$ is not 3-edge connected. 
     \end{proof}
We note that the reverse implication of Proposition~\ref{3conn} holds for all $3$-regular directed graphs. We follow this first result with a lemma that will be needed to properly count the number of paths. Recall that $\mu_D(y)$ denotes the number of monotone paths from the source to a vertex $y$.

\begin{lemma}\label{lem:treesort}
    Let $(D,\preceq)$ be a 3-regular (not necessarily simple) acyclic directed graph. Then, there exists an ordering $\preceq'$ that preserves the orientation of all of the edges of $D$ and has the property that $\mu_D$ is weakly increasing on all vertices with respect to $\preceq'$.
\end{lemma}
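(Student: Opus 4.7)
The plan is to produce $\preceq'$ by running a greedy topological sort on $D$. The key preliminary observation is that $\mu_D$ is already weakly monotone along every directed edge: for any edge $u \to v$ of $D$,
\[
\mu_D(v) \;=\; \sum_{w:\, w \to v} \mu_D(w) \;\geq\; \mu_D(u),
\]
since the right-hand sum contains the non-negative term $\mu_D(u)$. So the only obstruction to $\mu_D$ being weakly increasing in some linear extension comes from pairs of vertices that are \emph{incomparable} in the partial order induced by the edges of $D$.

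I would construct $\preceq'$ one vertex at a time, using a Kahn-style topological sort guided by $\mu_D$. Having chosen $v_1 \prec' v_2 \prec' \cdots \prec' v_k$, let $A_{k+1}$ denote the set of vertices not yet placed whose predecessors in $D$ all lie in $\{v_1, \ldots, v_k\}$; by acyclicity $A_{k+1}$ is nonempty (so long as vertices remain unplaced). Define $v_{k+1}$ to be any element of $A_{k+1}$ minimizing $\mu_D$, breaking ties arbitrarily. The resulting total order $v_1 \prec' v_2 \prec' \cdots \prec' v_{2n}$ respects the orientations of $D$ by construction.

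It then remains to check that $\mu_D(v_{k+1}) \geq \mu_D(v_k)$ for each $k$, and I would argue this by cases according to when $v_{k+1}$ became available. If $v_{k+1}$ already belonged to $A_k$, then the greedy choice of $v_k$ at the previous step gives $\mu_D(v_k) \leq \mu_D(v_{k+1})$ directly. Otherwise $v_{k+1}$ entered the available set only when $v_k$ was placed, so $v_k$ must be one of its predecessors, and the edge monotonicity displayed above applied to the edge $v_k \to v_{k+1}$ yields $\mu_D(v_{k+1}) \geq \mu_D(v_k)$. Either way the inequality holds for every consecutive pair, so $\mu_D$ is weakly increasing along $\preceq'$.

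There is no substantive obstacle here: the argument is essentially a priority-queue variant of the standard topological sort, and in fact 3-regularity is not used anywhere—the same proof works for any finite acyclic directed graph with a unique source. The only care required is in handling the two-case analysis cleanly and in checking that $A_{k+1}$ is always nonempty, both of which are standard.
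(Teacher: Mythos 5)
Your proof is correct: the edge-monotonicity observation $\mu_D(v)\geq\mu_D(u)$ for every edge $u\to v$, combined with the greedy (priority-queue) topological sort and the two-case check on consecutive vertices, establishes the lemma as stated --- and, as you note, for arbitrary finite acyclic digraphs, with no use of 3-regularity. The paper takes a genuinely different route: it partitions the outdegree-two vertices into trees rooted at $s$ or at indegree-two vertices (each outdegree-two vertex has a unique incoming edge, so $\mu_D$ is constant on each such tree), reorders so that each tree occupies a contiguous interval $[r_i,r_{i+1})$ of the total order, and finally swaps entire trees to sort them by their common $\mu_D$-value. What that heavier construction buys is extra structure beyond the statement of the lemma: in the resulting order, every vertex lying strictly between a root $r_i$ and the next root is an outgoing vertex of the $i^{\mathrm{th}}$ tree. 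This is used explicitly in the subsequent proof of Theorem~\ref{th:Ham} (for instance, the assertion that $p_i$ must be outgoing, and that the interval $[\ell_i,b_i]$ lies in a single tree on which $\mu$ is constant). Your ordering need not have this property --- two trees with the same $\mu_D$-value could be interleaved under arbitrary tie-breaking --- so while your argument is a cleaner and more general proof of the lemma itself, it is not a drop-in replacement for the specific ordering the paper's later arguments rely on.
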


\begin{proof}
     Let $(D,\preceq)$ be a 3-regular directed graph. Start at any outdegree two vertex and follow (backwards) the single incoming edge to its initial vertex. If this is another outdegree two vertex, repeat. If not, then stop. In this way, we can think of each outgoing vertex as belonging to a tree rooted at $s$ or at an indegree two vertex. Because there is a unique path from the root of each tree to each of its outgoing vertices, we find that $\mu$ is constant on each tree. Define $r_i$ to be the root of the $i^{\mathrm{th}}$ such tree (with respect to $\preceq$). Label the outgoing vertices as $o_1,o_2,\ldots,o_{n}$ according to $\preceq$. 
     
     We shall construct $\preceq'$ by reordering $\preceq$. Without loss of generality, assume $o_j$ belongs to the $i^{\mathrm{th}}$ tree. Then $r_i\preceq o_j$. If $r_{i+1} \succ o_j$, then keep the total order the same and consider $o_{j+1}$. If $r_{i+1} \prec o_j$, then rearrange the ordering by moving $o_j$ to be immediately preceding $r_{i+1}$. Observe that this will not change the orientation of any edge because the incoming edge of $o_j$ comes from a vertex strictly before $r_{i+1}$. In our new total order, all $o_k$'s with $k\leq j$ with root $r_i$ will have the property that $o_k\in [r_i,r_{i+1})$.
     Therefore, once this process is finished, the total order witnesses these tree subgraphs of $D$ by the fact that all vertices $y \in [r_i,r_{i+1})$ will belong to the $i^\mathrm{th}$ tree, $T_i$.
     
     Yet, it still may be possible for $\mu_D(r_i)>\mu_D(r_{i+1})$ in this ordering. However, if this is the case, we know that there is no edge between $T_i$ and $T_{i+1}$. Therefore, we may swap the whole trees $T_i$ and $T_{i+1}$ in the total order without changing the orientation of any edge of $D$. Thus, at the end of this process, we end up with a total order $\preceq'$ for $D$ on which $\mu_D$ is weakly increasing. 
\end{proof}

We can now start the proof of Theorem~\ref{th:Ham}. This theorem can be reformulated more precisely based on the previous lemma.

\textbf{Reformulation of Theorem~\ref{th:Ham}:}
    Let $D$ be a 3-regular (not necessarily simple), acyclic directed graph on $2n$ vertices equipped with the ordering provided by Lemma~\ref{lem:treesort}. Then, there exists a 3-regular directed graph $D'$ with a directed Hamiltonian path for which $\mu_D(y_i)\leq \mu_{D'}(y_i)$ for all $i\in [2n]$. Furthermore, if $D$ is 3-edge connected (resp.~2-edge connected, or simple), then so is $D'$.
\begin{proof}[Proof of Theorem~\ref{th:Ham}]
       We implement two types of local moves that change $D$ into our desired $D'$. Consider the set of outdegree two vertices of $D$ that are not connected to their preceding vertex. Label such outgoing vertices as $b_1,b_2,\ldots,b_k$. In $k$ steps, starting at $b_1$, and ending $b_k$, we shall connect $b_i$ to its preceding vertex in the total order at step $i$. At step $i$, denote the preceding vertex of $b_i$, as $p_i$. Notice that $p_i$ is not the root of the tree to which $b_i$ belongs. Hence, by the construction of our ordering, $p_i$ must be outgoing. Thus it has two outgoing edges $(p_{i},u_{i,1})$ and $(p_{i},u_{i,2})$ with $b_i \prec u_{i,1} \preceq u_{i,2}$. We label the incoming edge of $b_i$ as $(\ell_i, b_i)$. Delete the edges $(p_{i},u_{i,1}),(\ell_i,b_i)$ and add the edges $(\ell_i, u_{i,1}),(p_i,b_i)$. We include a schematic below of this change. The red edges are removed and replaced with the blue edges at each step.  
\[\begin{tikzcd}
	\ell_i &&& {p_{i}} & {b_i} && {u_{i,1}} && {u_{i,2}} \\
	\\
	\ell_i &&& {p_i} & {b_i} && {u_{i,1}} && {u_{i,2}}
	\arrow[red, curve={height=18pt}, from=1-1, to=1-5]
	\arrow["\times"{description}, dashed, from=1-4, to=1-5]
	\arrow[red, curve={height=-18pt}, from=1-4, to=1-7]
	\arrow[curve={height=-30pt}, from=1-4, to=1-9]
	\arrow[blue, curve={height=-18pt}, from=3-1, to=3-7]
	\arrow[blue, from=3-4, to=3-5]
	\arrow[curve={height=-18pt}, from=3-4, to=3-9]
\end{tikzcd}\]
Because the entire interval $[\ell_i, b_i]$ lies in the same tree, and $\mu$ is constant on each tree, the value of $\mu$ on every vertex remains the same. Therefore, for each $i\in [k]$, we perform these outgoing vertex edge swaps starting at $b_1$ and finishing at $b_k$ on $D$ until every outdegree two vertex is connected to the vertex preceding it in the total order. Call the resulting graph $\Tilde{D}$. 

Consider the set of indegree two vertices of $\tilde{D}$ that are not connected to their preceding vertex. Label such incoming vertices as $v_1,v_2,\ldots,v_h$. Denote their preceding vertices as $q_1,q_2,\ldots,q_h$. In $h$ steps, starting at $v_1$, and ending $v_h$, we shall connect $v_i$ to its preceding vertex in the total order at step $i$. At step $i$, denote the two incoming edges of $v_i$ as $(\ell_{i,1},v_i)$ and $(\ell_{i,2},v_i)$ where $\ell_{i,1}\preceq \ell_{i,2} \prec q_i$. Each $q_i$ must have an outgoing edge $(q_i,u_i)$. Delete the edges $(\ell_{i,2}, v_i),(q_{i},u_i)$ and add the edges $(\ell_{i,2}, u_i),(q_{i},v_i)$. We include a schematic below of this change. The red edges are removed and replaced with the blue edges at each step.

\[
\begin{tikzcd}
	\ell_{i,1} & \cdots & \ell_{i,2} & \cdots & {q_i} & {v_i} && u_i \\
	\\
	\ell_{i,1} & \cdots & \ell_{i,2} & \cdots & {q_{i}} & {v_i} && u_i
	\arrow[from=1-1, to=1-2]
	\arrow[curve={height=-18pt}, from=1-1, to=1-6]
	\arrow[from=1-2, to=1-3]
	\arrow[from=1-3, to=1-4]
	\arrow[red, curve={height=-18pt}, from=1-3, to=1-6]
	\arrow[from=1-4, to=1-5]
	\arrow["\times"{marking, allow upside down}, dashed, from=1-5, to=1-6]
	\arrow[red, curve={height=18pt}, from=1-5, to=1-8]
	\arrow[from=3-1, to=3-2]
	\arrow[curve={height=-18pt}, from=3-1, to=3-6]
	\arrow[from=3-2, to=3-3]
	\arrow[from=3-3, to=3-4]
	\arrow[blue, curve={height=-18pt}, from=3-3, to=3-8]
	\arrow[from=3-4, to=3-5]
	\arrow[blue, from=3-5, to=3-6]
\end{tikzcd}\]
We perform these incoming vertex moves for each $i\in [h]$ starting at $v_1$ and finishing at $v_h$ on $\tilde{D}$ until every (non-source) vertex is connected to the vertex preceding it in the total order. Upon completing all of these incoming vertex moves, we will have built a 3-regular graph $D'$ which contains a directed Hamiltonian path.

\begin{claim}
   When changing $D$ to $D'$, the number of directed paths from the source to each vertex weakly increases.
\end{claim}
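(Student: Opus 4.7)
The plan is to decompose the change from $D$ to $D'$ into the sequence of outgoing-vertex swaps (producing $\tilde D$) followed by the sequence of incoming-vertex swaps, and to prove that $\mu$ is preserved exactly by each outgoing swap and weakly increased at every vertex by each incoming swap. For an outgoing swap at $b_i$, the crucial point is that the construction in Lemma~\ref{lem:treesort} places $\ell_i$, $p_i$, and $b_i$ in a common tree on which $\mu$ is constant, so $\mu(\ell_i)=\mu(p_i)=\mu(b_i)$. Replacing the in-edges $(\ell_i,b_i)$ and $(p_i,u_{i,1})$ by $(p_i,b_i)$ and $(\ell_i,u_{i,1})$ therefore changes no in-contribution at $b_i$ or $u_{i,1}$, and induction along the topological order extends this to every vertex; one also has to verify that the invariant of Lemma~\ref{lem:treesort} persists so the remaining outgoing swaps are still well-posed.

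For a single incoming swap at $v_i$, I would partition paths from $s$ to an arbitrary vertex $y$ according to which of the deleted edges $(\ell_{i,2},v_i),(q_i,u_i)$ and new edges $(\ell_{i,2},u_i),(q_i,v_i)$ they use. The order relations $\ell_{i,2}\prec q_i\prec v_i\prec u_i$ together with acyclicity imply that no single path can use two of these distinguished edges at once, which yields the clean identity
\[
\mu_{\mathrm{new}}(y)-\mu_{\mathrm{old}}(y) \;=\; \bigl(\mu(q_i)-\mu(\ell_{i,2})\bigr)\cdot\bigl(P(v_i,y)-P(u_i,y)\bigr),
\]
where $P(a,b)$ denotes the number of directed paths from $a$ to $b$ in the common subgraph. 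The first factor is non-negative since $\mu$ is weakly increasing along the ordering, a property I would maintain as an invariant throughout the full sequence of moves. The second factor is non-negative provided $v_i$ reaches $u_i$ in the current graph, because in that case prepending a fixed $v_i\leadsto u_i$ segment injects paths out of $u_i$ into paths out of $v_i$, giving $P(u_i,y)\le P(v_i,y)$.

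The main obstacle will be justifying the inequality $P(v_i,y)\ge P(u_i,y)$ and carrying the bookkeeping invariants through every incoming swap, especially because a move on some $v_j$ with $j<i$ can alter the in-edges of a later $v_i$ whenever $u_j=v_i$, potentially relabeling $\ell_{i,2}$. I would handle this by selecting $u_i$ to be an out-neighbor of $q_i$ reachable from $v_i$ in the current graph so that the required $v_i\leadsto u_i$ prefix exists, and by simultaneously tracking, at each step, both the order invariant that $\mu$ is weakly increasing and the labeling invariant needed so that the next incoming move is well defined. Once both invariants are secured, the composition of all outgoing and incoming swaps yields $\mu_{D'}(y)\ge\mu_D(y)$ for every vertex $y$, which is exactly the claim.
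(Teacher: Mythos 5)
Your treatment of the outgoing-vertex swaps matches the paper's (since $\ell_i,p_i,b_i$ lie in one tree, $\mu$ is literally unchanged), and your path-counting identity for a single incoming swap is correct: no $s$-to-$y$ path can use two of the four distinguished edges, so the difference factors as $\bigl(\mu(q_i)-\mu(\ell_{i,2})\bigr)\bigl(P(v_i,y)-P(u_i,y)\bigr)$. The genuine gap is in the sign of the second factor. Taking $y=u_i$ gives $P(u_i,u_i)=1$, while $P(v_i,u_i)=0$ whenever $v_i$ does not reach $u_i$ in the common subgraph --- e.g.\ if $u_i$ is the immediate successor of $v_i$ in the order and the edge $(v_i,u_i)$ is absent, no such path can exist. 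In that case $\mu(u_i)$ strictly \emph{decreases} by $\mu(q_i)-\mu(\ell_{i,2})$, so a single incoming swap does \emph{not} weakly increase $\mu$ at every vertex, and the move-by-move strategy fails. Your proposed repair --- choosing $u_i$ to be an out-neighbor of $q_i$ reachable from $v_i$ --- is not always available: when $q_i$ has indegree two it has a unique out-edge, so there is no choice of $u_i$ at all, and even with two out-edges neither target need be reachable from $v_i$. (Your other invariant, global monotonicity of $\mu$ in the intermediate graphs, is also false as stated, though the instance you actually need, $\mu(q_i)\ge\mu(\ell_{i,2})$, does hold because every vertex up to $q_i$ is already joined to its predecessor at that stage.)

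This is exactly the difficulty the paper's proof is built to circumvent: rather than proving pointwise improvement per move, it compares $\tilde D$ to the final $D'$ all at once and proves by induction along the order the stronger statement \eqref{eq:mu}, namely $\mu_{D'}(y_i)-\mu_{D}(y_i)\ge\sum_{j\in S_i}\bigl(\mu_D(q_j)-\mu_D(\ell_{j,2})\bigr)$, where $S_i$ records which intervals $[v_j,u_j)$ contain $y_i$. The surplus $\mu_D(q_j)-\mu_D(\ell_{j,2})$ created at $v_j$ is carried forward along the Hamiltonian path as a credit and is spent precisely at $u_j$, where the in-edge is downgraded from $q_j$ to $\ell_{j,2}$; the loss at $u_j$ is thereby absorbed and the net change stays nonnegative. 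To salvage your approach you would need an analogous bookkeeping device (an amortized or potential-function argument); the per-move inequality you are aiming for is simply not true.
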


Suppose $y_1,y_2,\ldots,y_{2n}$ are the vertices of $\tilde{D}$. For each $i\in [2n]$, define the set $S_{i}\coloneq \{j\in [h]: y_i\in [v_{j},u_j)\}$. To show $\mu_{D'}(y_i)\geq \mu_{D}(y_i)$ for all $i$, we shall prove the inequality: 
\begin{equation}\label{eq:mu}
    \mu_{D'}(y_i)-\mu_{D}(y_i)\geq \sum_{j\in S_i}\left(\mu_{D}(q_j)-\mu_{D}(\ell_{j,2})\right).
\end{equation}

Observe that by Lemma~\ref{lem:treesort}, the right-hand side of equation~\eqref{eq:mu} is nonnegative as $\mu$ is weakly increasing.
 Furthermore, to show $\mu_{D'}(y_i)\geq \mu_D(y_i)$ for all $i\in [2n]$, it suffices to prove equation~\eqref{eq:mu} for all $i < 2n$ because the number of source-to-sink paths can be counted as $3+\sum_{i=2}^{n}\mu(o_i)$. Indeed, each indegree two vertex simply appends an extra edge to each path leading to it, meanwhile each outgoing vertex takes every path going into it and splits it into two paths. We proceed via induction on $i$. 
 
Observe that all vertices in $[s,v_1)$ will keep the same value of $\mu$. On the right-hand side, we find that for all vertices $y_i\in [s,v_1)$, we have $S_{i}=\emptyset$. Assume equation~\eqref{eq:mu} holds for all vertices strictly before the vertex $y_{i}$. For the inductive step, we shall break into multiple, albeit short and similar, cases depending on how the incoming vertex moves affect $y_i$. Before commencing, observe that the vertices $u_j$ must all be incoming because all outgoing vertices will stay connected to their preceding vertices. Therefore if $y_i$ is outgoing, $S_i=S_{i-1}$. 

\textbf{Case 1:} $S_i=S_{i-1}$. Then, $y_i$ is connected to $y_{i-1}$ in $\tilde{D}$ and its incoming edges do not change from $\tilde{D}$ to $D'$. If $y_i$ is an outgoing vertex, then we may simply apply induction on $y_{i-1}$, noticing that $\mu_{D'}(y_i)=\mu_{D'}(y_{i-1})$ and $\mu_{D}(y_i)=\mu_{D}(y_{i-1})$. Thus, assume $y_i$ is incoming, and denote the non-Hamiltonian incoming edge of $y_i$ in $D'$ as $(x,y_i)$. Notice that because $S_i=S_{i-1}$, we find that $x$ is also connected to $y_i$ in $\tilde{D}$. Using the induction hypothesis, we obtain the following: 
\begin{align*}
    \mu_{D'}(y_i)-\mu_{D}(y_i)&=\mu_{D'}(y_{i-1})+\mu_{D'}(x)-\mu_{D}(y_{i-1})-\mu_{D}(x)\\
    &=\mu_{D'}(y_{i-1})-\mu_{D}(y_{i-1})+\mu_{D'}(x)-\mu_{D}(x)\\
    &\geq \sum_{j\in S_{i-1}} \left(\mu_{D}(q_j)-\mu_{D}(\ell_{j,2})\right)\\
    &= \sum_{j\in S_{i}}\left(\mu_{D}(q_j)-\mu_{D}(\ell_{j,2})\right).
\end{align*}
\textbf{Case 2:} $S_i=S_{i-1}\cup \{m\}$ for some $m\in [h]$. Then, at a single move, one of $y_i$'s edges is shifted to be part of the (partial) Hamiltonian path. More precisely, $y_i=v_{m}$ and $y_i\neq u_j$ for all $j$. Thus a single incoming edge of $y_i$ is changed when moving from $\tilde{D}$ to $D'$. Namely, we replace $(\ell_{m,2},y_i)$ with $(q_{m},y_i)=(y_{i-1},y_i)$. Consider the following: 
\begin{align*}
    \mu_{D'}(y_i)-\mu_{D}(y_i)&=\mu_{D'}(y_{i-1})+\mu_{D'}(\ell_{m,1})-\mu_D(\ell_{m,2})-\mu_D(\ell_{m,1})\\
    &\geq \mu_{D'}(y_{i-1})-\mu_D(\ell_{m,2})\\
    &=\mu_{D'}(y_{i-1})-\mu_D(y_{i-1})+\mu_D(y_{i-1})-\mu_D(\ell_{m,2})\\
    &\geq \left(\sum_{j\in S_{i-1}}\mu_D(q_j)-\mu_D(\ell_{j,2})\right)+\mu_D(q_{m})-\mu_D(\ell_{m,2})\\
    &=\sum_{j\in S_{i}}\left(\mu_D(q_{j})-\mu_D(\ell_{j,2})\right).
\end{align*}
\textbf{Case 3:} $S_{i-1}\setminus\{m\}=S_i$ for some $m\in [h]$. Then, $y_i$ is connected to its preceding vertex and at a single step, one of $y_i$'s edges is shifted backwards. More precisely, $y_i=u_{m}$ and $y_i\neq v_{j}$ for all $j$. Consider the following:
\begin{align*}
    \mu_{D'}(y_i)-\mu_D(y_i)&=\mu_{D'}(y_{i-1 })+\mu_{D'}(\ell_{m,2})-\mu_D(q_{m})-\mu_D(y_{i-1})\\
    &=\mu_{D'}(y_{i-1})-\mu_D(y_{i-1})+\mu_{D'}(\ell_{m,2})-\mu_D(q_{m})\\
    &\geq \left(\sum_{j\in S_{i-1}}\mu_D(q_j)-\mu_{D}(\ell_{j,2}) \right)+ \mu_{D}(\ell_{m,2})-\mu_D(q_{m})\\
    &=\sum_{j\in S_{i}}\left(\mu_D(q_j)-\mu_{D}(\ell_{j,2})\right).
\end{align*}
\textbf{Case 4:} $S_{i-1}\cup \{m_3\} \setminus \{m_1,m_2\}=S_i$ for some $m_1,m_2,m_3\in [h]$. Thus, $y_i$ is simultaneously $u_{m_1},u_{m_2}$ and $v_{m_3}$. There are two moves shifting each of $y_i$'s two incoming edges back and then a move that shifts one of these edges forward into the (partial) Hamiltonian path. More precisely, going into our final move, $y_i=v_{m_3}$ will be connected to both $\ell_{m_1,2}$ and $\ell_{m_2,2}$. Define $\ell^+\coloneq\ell_{m_3,2}=\text{max}_{\preceq}\{\ell_{m_1,2},\ell_{m_2,2}\}$ and $\ell^-\coloneq\ell_{m_3,1}=\text{min}_{\preceq}\{\ell_{m_1,2},\ell_{m_2,2}\}$. Therefore, we have $\mu_{D'}(y_i)=\mu_{D'}(y_{i-1})+\mu_{D'}(\ell^-)$ and $\mu_D(y_i)=\mu_D(q_{m_1})+\mu_D(q_{m_2})$. We may assume without loss of generality that $q_{m_1}\preceq q_{m_2}$ in our total order. Consider the following: 
\begin{align*}
    \mu_{D'}(y_i)-\mu_D(y_i)&=\mu_{D'}(y_{i-1})+\mu_{D'}(\ell^-)-\mu_D(q_{m_1})-\mu_D(q_{m_2})\\
    &= \mu_{D'}(y_{i-1})-\mu_D(y_{i-1})+\mu_D(y_{i-1})-\mu_D(\ell^+)\\
    &\ \ \ \ \ \ \ +\mu_D(\ell^+)-\mu_D(q_{m_1})+\mu_{D'}(\ell^-)-\mu_D(q_{m_2})\\
    &\geq \mu_{D'}(y_{i-1})-\mu_D(y_{i-1})+\mu_D(q_{m_3})-\mu_D(\ell_{m_3,2})\\
    &\ \ \ \ \ \ \ +\mu_D(\ell_{m_1,2})-\mu_D(q_{m_1})+\mu_{D}(\ell_{m_2,2})-\mu_D(q_{m_2})\\
    &\geq \sum_{j\in S_i}\left(\mu_D(q_j)-\mu_D(\ell_{j,2})\right).
\end{align*}

\textbf{Case 5:} $S_{i-1}\cup \{m_2\}\setminus\{m_1\}=S_i$ for some $m_1,m_2\in [h]$. Then, there is a move that shifts one of $y_i$'s edges back and a second move that shifts the closer edge forward to the (partial) Hamiltonian path. The two subcases distinguish between which edge is closer. More precisely, in $\tilde{D}$, $y_i=v_{m_2}=u_{m_1}$ is connected to $q_{m_1}$ and some vertex $w$. We have two subcases depending on $w$'s position relative to $\ell_{m_1,2}$. 
\newline
Case 5(a): Assume $w\prec \ell_{m_1,2}$. Then, we have $\mu_{D'}(y_i)=\mu_{D'}(y_{i-1})+\mu_{D'}(w)$ and $\mu_{D}(y_{i})=\mu_D(w)+\mu_D(q_{m_1})$. Consider the following: 
\begin{align*}
    \mu_{D'}(y_i)-\mu_{D}(y_i)&=\mu_{D'}(y_{i-1})+\mu_{D'}(w)-\mu_D(w)-\mu_D(q_{m_1})\\
    &\geq\mu_{D'}(y_{i-1})-\mu_D(q_{m_1})\\
    &\geq\mu_{D'}(y_{i-1})-\mu_D(y_{i-1})+\mu_D(y_{i-1})-\mu_D(\ell_{m_1,2})\\
    &~\ \ \ \ \ \ \ +\mu_D(\ell_{m_1,2})-\mu_D(q_{m_1})\\
    &=\mu_{D'}(y_{i-1})-\mu_D(y_{i-1})+\mu_D(q_{m_2})-\mu_D(\ell_{m_2,2})\\
    &~\ \ \ \ \ \ \ +\mu_D(\ell_{m_1,2})-\mu_D(q_{m_1})\\
    &\geq \sum_{j\in S_i} \left(\mu_{D}(q_j)-\mu_D(\ell_{j,2})\right).
\end{align*}
Case 5(b): Assume $w\succ \ell_{m_1,2}$. In this case, we have that $w=\ell_{m_2,2}$ and we have $\mu_{D'}(y_i)=\mu_{D'}(y_{i-1})+\mu_{D'}(\ell_{m_1,2})$ and $\mu_D(y_{i})=\mu_D(w)+\mu_D(q_{m_1})$. Consider the following: 
\begin{align*}
    \mu_{D'}(y_i)-\mu_D(y_i)&=\mu_{D'}(y_{i-1})+\mu_{D'}(\ell_{m_1,2})-\mu_D(w)-\mu_D(q_{m_1})\\
    &=\mu_{D'}(y_{i-1})-\mu_D(w)+\mu_{D'}(\ell_{m_1,2})-\mu_{D}(q_{m_1})\\
    &=\mu_{D'}(y_{i-1})-\mu_D(y_{i-1})+\mu_{D}(y_{i-1})-\mu_D(\ell_{m_2,2})\\
    &~\ \ \ \ \ \ \ +\mu_{D'}(\ell_{m_1,2})-\mu_{D}(q_{m_1})\\
    &\geq\mu_{D'}(y_{i-1})-\mu_D(y_{i-1})+\mu_{D}(q_{m_2})-\mu_D(\ell_{m_2,2})\\
    &~\ \ \ \ \ \ \ +\mu_{D}(\ell_{m_1,2})-\mu_{D}(q_{m_1})\\
    &\geq \sum_{j\in S_i}\left(\mu_{D}(q_j)-\mu_{D}(\ell_j)\right).
\end{align*}
We assert that these are all of the possible cases because $S_i$ can gain at most one index and lose at most two indices from $S_{i-1}$. Additionally, it is impossible for $S_i$ to lose two indices without gaining one, as we would not have a Hamiltonian path otherwise. Therefore, we have proven that our local moves could have only increased the value of $\mu$ on each vertex and thus the total number of paths on our graph. Next, we prove the latter claims made in the statement of this theorem. 
\begin{claim}
    If $D$ is simple, then so is $D'$. 
\end{claim}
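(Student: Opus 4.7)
The plan is to show $D'$ is simple through a careful analysis of the entire move sequence, rather than a naive move-by-move simplicity check. The critical observation is that an outgoing move at step $i$ may introduce a multi-edge $(\ell_i, u_{i,1})$ when $\ell_i$ already has an outgoing edge to $u_{i,1}$ in the current graph; yet such a multi-edge is always resolved by a later incoming move. Thus the intermediate graph $\tilde{D}$ need not be simple even though both $D$ and the final $D'$ are.

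To carry this out, I would first analyze which outgoing moves could produce multi-edges. Suppose the move at step $i$ adds $(\ell_i, u_{i,1})$ as a duplicate. Then $u_{i,1}$ must be incoming with its two in-edges now both emanating from $\ell_i$. Using the tree structure of Lemma~\ref{lem:treesort}, one can argue that $u_{i,1}$ is not connected to its preceding vertex in $\preceq'$ (since its in-neighbors all lie inside the tree $T$ containing $b_i$, while $u_{i,1}$ itself roots a distinct tree placed after $T$). Consequently, $u_{i,1}$ is some $v_j$ in the incoming-move list. At step $j$, the duplicated source forces $\ell_{j,1} = \ell_{j,2} = \ell_i$, and the move replaces one copy of $(\ell_i, u_{i,1})$ with a new edge $(\ell_i, u_j')$ pointing to a distinct target, thereby resolving the multi-edge.

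The remaining edges added by moves, namely $(p_i, b_i)$ from outgoing moves and $(q_i, v_i)$ from incoming moves, are never duplicates by the very hypotheses defining each move ($b_i$ is not connected to $p_i$, and $v_i$ is not connected to $q_i$). It then remains to verify that the other edge $(\ell_{i,2}, u_i)$ added during incoming moves does not introduce an unresolved multi-edge; this is handled by an analogous case analysis, using the fact that any multi-edge so created would force $u_i$ to have two incoming edges from $\ell_{i,2}$, which the structure of $q_i$'s outgoing neighborhood in the current graph forbids.

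The main obstacle is the bookkeeping to match every potential multi-edge from an outgoing step with its resolving incoming step, and to confirm that no multi-edges remain in $D'$. The key ingredients are the contiguity of tree blocks in $\preceq'$, the $\mu$-monotonicity across trees, and the fact that $p_i \neq p_j$ for distinct $i, j$ (since each $b_i$ has a unique preceding vertex). These constraints make the matching well-defined and ensure that every duplicate created transiently during the outgoing phase is eliminated during the incoming phase.
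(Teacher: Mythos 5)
Your strategy---track every parallel edge created during the local moves and show that each one is destroyed by a later move---is workable in principle, and your observation that the intermediate graph $\tilde{D}$ need not be simple is correct (the paper's proof sidesteps intermediate graphs entirely). But as written there is a genuine gap. The step handling the incoming moves asserts that a doubled edge $(\ell_{i,2},u_i)$ cannot arise because ``the structure of $q_i$'s outgoing neighborhood forbids it.'' This is not justified and appears to be false: nothing prevents $\ell_{i,2}$'s second outgoing edge from already pointing at $u_i$, so that $u_i$ is an incoming vertex with in-edges from both $q_i$ and $\ell_{i,2}$; the incoming move at step $i$ then really does create a parallel pair $(\ell_{i,2},u_i)$. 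Since both in-neighbors of $u_i$ precede $v_i$, the vertex $u_i$ is itself some later $v_m$, so this pair would have to be \emph{resolved} by a still-later incoming move rather than being forbidden---meaning your matching must handle chains of resolutions inside the incoming phase and show the process terminates without leaving a duplicate. The proposal defers exactly this bookkeeping (``the main obstacle is the bookkeeping''), so the hard part of the argument is not actually carried out.

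The paper's proof is much shorter because of a structural observation you are missing: in $D'$, every vertex other than $s$ and $t$ has total degree $3$ and already carries the two Hamiltonian-path edges at it, so a parallel pair $u\to v$ forces $u$ to be the Hamiltonian predecessor of $v$ (otherwise $v$ has indegree at least $3$ and hence degree at least $4$, unless $v=t$; and a doubled edge into $t$ from $u\neq s$ would give $u$ degree $4$). Hence any multi-edge of $D'$ either doubles a Hamiltonian-path edge or is a doubled $s$--$t$ edge. The first is impossible because the consecutive-vertex edges $(p_i,b_i)$ and $(q_i,v_i)$ are only added when absent, while the redirected edges $(\ell_i,u_{i,1})$ and $(\ell_{i,2},u_i)$ never join consecutive vertices (two vertices lie strictly between their endpoints). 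The second is impossible because simplicity of $D$ gives $u_{i,1}\prec u_{i,2}$ and $\ell_{i,1}\prec\ell_{i,2}$, so no added edge starts at $s$ and ends at $t$. This reduces the claim to two easy checks and makes the transient multi-edge tracking unnecessary; if you prefer your route, you must complete the resolution matching (including the chained case above) and also rule out a surviving parallel pair formed by edges your case analysis never touches, which is precisely what the degree count handles for free.
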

Assume $D$ is simple and, for the sake of contradiction, that $D'$ is not. Then, because $D'$ is 3-regular, it contains a duplicate of an edge on the Hamiltonian path or a duplicate of an edge between $s$ and $t$. Because we only add an edge between two consecutive vertices when no edge is already present, the former cannot occur. Also, since $D$ is simple, $u_{i,1}\prec u_{i,2}$, so neither of the edges $(\ell_i,u_{i,1}),(p_i,b_i)$ may be from $s$ to $t$ in our outgoing vertex moves. Furthermore, since $D$ is simple, $\ell_{i,1} \prec \ell_{i,2}$, so neither of the edges $(\ell_{i,2},u_i),(q_i,v_i)$ can be between $s$ and $t$ in our incoming vertex moves. Hence we reach a contradiction and conclude that $D'$ must be simple. 
\begin{claim}
    If $D$ is 2-edge connected, then so is $D'$. 
\end{claim}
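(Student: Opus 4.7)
My approach is a cut-counting argument carried out in the topological order $\preceq'$ provided by Lemma~\ref{lem:treesort}. For any graph $G$ whose vertices $y_1, \ldots, y_{2n}$ are topologically ordered, and for each $t \in \{1, \ldots, 2n-1\}$, let $f_G(t)$ denote the number of edges of $G$ going from the initial segment $\{y_1, \ldots, y_t\}$ into its complement. Because every edge points forward in the topological order, one computes
\[
f_G(t) \;=\; \sum_{i=1}^{t}\bigl(\mathrm{outdeg}_G(y_i) - \mathrm{indeg}_G(y_i)\bigr),
\]
so $f_G(t)$ depends only on the degree sequence read along $\preceq'$, not on the particular edges of $G$.

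The plan is then to proceed in three steps. First, I would check that each local move preserves the in- and out-degrees at every affected vertex (each move swaps one out-edge for another at one vertex and one in-edge for another at another) and that $\preceq'$ remains a topological order throughout: the inequalities $\ell_i \prec' p_i \prec' b_i \prec' u_{i,1}$ for the outgoing moves, and $\ell_{i,2} \prec' q_i \prec' v_i$ together with $q_i \prec' u_i$ for the incoming moves, show that the newly added edges still point forward. Combined with the formula above, these give $f_D(t) = f_{D'}(t)$ for every $t$. Second, the 2-edge connectivity of $D$ forces every edge cut of $D$ to have size at least $2$; applied to the cut $\{y_1, \ldots, y_t\} \mid \{y_{t+1}, \ldots, y_{2n}\}$ this yields $f_D(t) \geq 2$, and hence $f_{D'}(t) \geq 2$, for all $t$. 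Third, to translate this lower bound into 2-edge connectivity of $D'$, I would use the directed Hamiltonian path of $D'$: removing a non-Hamiltonian edge leaves the whole path intact, while removing a Hamiltonian edge $(y_t, y_{t+1})$ still leaves at least one other edge of $D'$ crossing the cut at $t$, which reconnects the two Hamiltonian subpaths $y_1 \to \cdots \to y_t$ and $y_{t+1} \to \cdots \to y_{2n}$.

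The main thing requiring care is confirming that the cut invariant $f(t)$ genuinely transfers through the full sequence of moves $D \leadsto \tilde D \leadsto D'$: one must verify both that $\preceq'$ stays topological across every intermediate graph and that the in- and out-degrees at each affected vertex are preserved. Both properties are essentially baked into the construction of the local moves in the proof of Theorem~\ref{th:Ham}, but they must be recorded explicitly so that the identity $f_D = f_{D'}$ is secure. Once this is in hand, the rest of the argument is the short Hamiltonian case analysis described above.
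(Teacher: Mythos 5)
Your argument is correct and takes essentially the same route as the paper: the paper likewise reduces 2-edge connectivity of $D'$ to the sizes of the initial-segment cuts along its Hamiltonian path and notes that these sizes are determined by the incoming/outgoing type of each vertex in the order, which the local moves preserve. Your invariant $f_G(t)$ simply makes that degree-counting bookkeeping explicit.
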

Observe that a 3-regular graph on a Hamiltonian path is not 2-edge connected if and only if there exists an edge of the path that is a bridge. Therefore, $D'$ is not 2-edge connected if and only if there exists strictly more incoming vertices than outgoing vertices in some initial segment of the path. This is because all indegree two vertices must pair with outdegree two vertices and two more indegree two vertices are required to pair with the (outgoing) source vertex. However, observe that our local moves preserve whether a vertex is incoming or outgoing. Therefore, if such a structure is present in $D'$, then $D$ cannot be 2-edge connected. 
\begin{claim}
    If $D$ is 3-edge connected, then so is $D'$. 
\end{claim}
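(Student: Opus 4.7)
The plan is to prove the contrapositive: if $D'$ is not $3$-edge connected, then neither is $D$. Because $D'$ has a directed Hamiltonian path by construction, Proposition~\ref{3conn} applies to $D'$ and gives two possible structures: (1) some initial segment of the ordering contains strictly more indegree-two than outdegree-two vertices, or (2) there exists an interval $[y_r,y_s]$ on the Hamiltonian path whose only outside edges are the two boundary Hamiltonian edges $(y_{r-1},y_r)$ and $(y_s,y_{s+1})$.

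Structure~(1) is the easier case. Each local move only redirects the four endpoints of two edges among a fixed set of four vertices, so every vertex retains both its indegree and its outdegree. Consequently the same initial segment witnesses the inequality between indegree-two and outdegree-two vertices in $D$ as well, and the reverse implication of Proposition~\ref{3conn} (which remains valid for any $3$-regular acyclic directed graph, without requiring a Hamiltonian path) yields that $D$ is not $3$-edge connected.

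For structure~(2), I would use the additional observation that the local moves preserve every consecutive cut size $f(k) := |E(\{y_1,\ldots,y_k\}, \{y_{k+1},\ldots,y_{2n}\})|$: a direct check on the four-vertex tuple $(\ell,p,b,u)$ shows that the indicator function ``this edge crosses position $k$,'' summed over the two removed edges, equals the same sum over the two newly added edges. Writing the cut across any interval $[y_a,y_b]$ as $f(a-1)+f(b)-2\,E(L,R)$, where $L,R$ are the initial and final segments outside the interval, the cut depends only on the preserved values $f(\cdot)$ and on the number $E(L,R)$ of edges that skip the interval entirely. Assuming for contradiction that $D$ is $3$-edge connected while $D'$ has a $2$-cut across $I=[y_r,y_s]$, this formula (together with a parity check) forces the cut across $I$ in $D$ to be even and at least $4$; the excess cut edges come precisely from ``pattern-$0110$'' moves, whose four-vertex tuple $(\ell,p,b,u)$ satisfies $p,b\in I$ and $\ell,u\notin I$. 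Each such move leaves a concrete pair of cross-$I$ edges $(\ell,b),(p,u)$ in $D$ that become the skip chord $(\ell,u)$ and the internal Hamiltonian edge $(p,b)$ in $D'$.

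The main obstacle is then to convert these extra cut edges in $D$ into a different interval $I^*$ of the ordering with cut at most $2$. The natural strategy is to adjust $I$ by one boundary vertex at a time---either shrinking it by removing $y_r$ or $y_s$, or extending it to include $y_{s+1}$ or the further endpoint $u$ arising from a pattern-$0110$ pair---and to use the explicit formulas for how $f(\cdot)$ and $E(L,R)$ change under each adjustment to control the new cut value. One would argue that an extremal choice of interval or an iterative shrinking/extending process guided by the indegrees, outdegrees, and edge-types at the boundary must eventually produce an interval with cut at most $2$ in $D$, or else force structure~(1) in $D$, in either case giving the desired contradiction. The hardest step is carrying out this combinatorial case analysis in full generality, as the interplay between the pattern-$0110$ quadruples and the local boundary modifications requires careful bookkeeping.
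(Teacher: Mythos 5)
Your reduction to Proposition~\ref{3conn} applied to $D'$, your treatment of structure~(1) via preservation of in/out-degrees, and your observation that every consecutive-cut size $f(k)$ is invariant under the local moves are all correct; the parity consequence (the cut of a fixed interval $I$ changes by an even amount across the whole process, so it would have to be even and at least $4$ in $D$) is a clean reformulation of the fact that only your ``pattern-$0110$'' configuration changes the cut, and changes it by exactly $2$. But the argument is not complete: everything after ``the main obstacle is then to convert these extra cut edges\dots'' is a plan rather than a proof. You do not show that the proposed shrinking/extending of $I$ terminates, nor that it produces an interval $I^*$ with cut at most $2$ in $D$ or forces structure~(1) in $D$ --- and this is precisely the entire content of the claim in its hardest case. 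The detour is also pointed in an unpromising direction: under your standing hypothesis $D$ \emph{is} $3$-edge connected, so no such $I^*$ exists, and you have not identified any mechanism by which the pattern-$0110$ quadruples would manufacture one.

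The case closes much more directly by staying with the fixed interval $I$ and inducting over the moves: in the one configuration that changes the cut (for an outgoing move, $\ell_i\notin I$, $p_i,b_i\in I$, $u_{i,1}\notin I$), one exhibits three edges that still cross $I$ \emph{after} the move, namely $(p_i,u_{i,2})$ (which leaves $I$ since $u_{i,2}\succeq u_{i,1}$ lies to the right of $I$), an incoming edge of the leftmost vertex of $I$ (which must originate to the left of $I$, as $I$ does not contain the source), and an outgoing edge of the rightmost vertex of $I$; these are pairwise distinct because $p_i$ is neither the leftmost nor the rightmost vertex of $I$ (the vertex $b_i\succ p_i$ also lies in $I$). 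The analogous check for the incoming-vertex moves --- which your write-up never addresses, although they require their own verification with $(\ell_{i,1},v_i)$ playing the role of the surviving chord --- works the same way. Hence the cut of $I$ never drops below $3$ at any intermediate stage, contradicting $\mathrm{cut}_{D'}(I)=2$ outright, with no auxiliary interval and no iterative boundary adjustment needed.
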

For the final assertion, assume that $D$ is 3-edge connected. Assume for the sake of contradiction that $D'$ is not. Then, we must have one of the two forbidden structures listed in Proposition~\ref{3conn}. However, we can immediately rule out the first structure because these local moves preserve whether a vertex is incoming or outgoing. Thus, we must have a nonempty proper interval of vertices $I$ such that the only two edges that connect $I$ to its complement are part of the Hamiltonian path. Consider an outgoing move as follows:
 \[\begin{tikzcd}
	\ell_i &&& {p_{i}} & {b_i} && {u_{i,1}} && {u_{i,2}} \\
	\\
	\ell_i &&& {p_i} & {b_i} && {u_{i,1}} && {u_{i,2}}
	\arrow[red, curve={height=18pt}, from=1-1, to=1-5]
	\arrow["\times"{description}, dashed, from=1-4, to=1-5]
	\arrow[red, curve={height=-18pt}, from=1-4, to=1-7]
	\arrow[curve={height=-30pt}, from=1-4, to=1-9]
	\arrow[blue, curve={height=-18pt}, from=3-1, to=3-7]
	\arrow[blue, from=3-4, to=3-5]
	\arrow[curve={height=-18pt}, from=3-4, to=3-9]
\end{tikzcd}\]
Let $S$ be the subset of $\{\ell_i,p_i,b_i,u_{i,1}\}$ that is contained in $I$. If $S\neq \{p_i,b_i\}$, then the number of edges connecting $I$ to its complement did not change. If $S=\{p_i,b_i\}$, then we know that $(p_i,u_{i,2}),$ an edge going into the leftmost vertex of $I$, and an edge leaving the rightmost vertex of $I$, will still be present. Therefore, since there were three edges bridging $I$ to its complement in $D$, there will still be three edges bridging $I$ to its complement in $\tilde{D}$ after performing any outgoing vertex move. Consider an incoming vertex move as follows:
\[
\begin{tikzcd}
	\ell_{i,1} & \cdots & \ell_{i,2} & \cdots & {q_i} & {v_i} && u_i \\
	\\
	\ell_{i,1} & \cdots & \ell_{i,2} & \cdots & {q_{i}} & {v_i} && u_i
	\arrow[from=1-1, to=1-2]
	\arrow[curve={height=-18pt}, from=1-1, to=1-6]
	\arrow[from=1-2, to=1-3]
	\arrow[from=1-3, to=1-4]
	\arrow[red, curve={height=-18pt}, from=1-3, to=1-6]
	\arrow[from=1-4, to=1-5]
	\arrow["\times"{marking, allow upside down}, dashed, from=1-5, to=1-6]
	\arrow[red, curve={height=18pt}, from=1-5, to=1-8]
	\arrow[from=3-1, to=3-2]
	\arrow[curve={height=-18pt}, from=3-1, to=3-6]
	\arrow[from=3-2, to=3-3]
	\arrow[from=3-3, to=3-4]
	\arrow[blue, curve={height=-18pt}, from=3-3, to=3-8]
	\arrow[from=3-4, to=3-5]
	\arrow[blue, from=3-5, to=3-6]
\end{tikzcd}\]
Let $T$ be the subset of $\{\ell_{i,2},q_i,v_i,u_i\}$ that is contained in $I$. If $T\neq \{q_i,v_i\}$, then the number of edges connecting $I$ to its complement did not change. If $T=\{q_i,v_i\}$, then we know that $(\ell_{i,1},v_i)$, an edge going into the leftmost vertex of $I$, and an edge leaving the rightmost vertex of $I$, will still be present. Therefore, since there were three edges bridging $I$ to its complement in $\tilde{D}$, there will still be three edges bridging $I$ to its complement in $D'$ after performing any incoming vertex move.
Therefore, such an interval $I$ cannot exist, which means that $D'$ is 3-edge connected as desired. 
\end{proof}

\section{Technical results and supplementary bounds}\label{sec:th3}

\subsection{Construction of tuples} We suppose from now on that our graph has a Hamiltonian path and transition to a slightly different class of graphs as outlined in the introduction, directed acyclic graphs on a directed Hamiltonian path such that:
\begin{itemize}
    \item[a)] For the source $s$ and sink $t$, $\deg(s)=\deg(t)=2$. For any other vertex $v$, $\deg(v)=3$.
    \item[b)] There does not exist a proper interval on the Hamiltonian path $I=[y_i,y_j]$, $i<j$, such that at most two edges connect $I$ to the rest of the vertices. Note that there are always at least two edges for $i>1$ and $j<n$ as $(y_{i-1},y_i)$ and $(y_{j},y_{j+1})$ are two edges of the Hamiltonian path.
\end{itemize}

Recall that one can count the number of source-to-sink paths as $3+\sum_{i=2}^{n}\mu(o_i)$. Thus, if two graphs have the same values of $\mu$ for each of their $o_i$'s, then they must have the same number of paths. 

Let $D$ be a graph satisfying the conditions outlined above. 
We shall define an \textit{arc} of $D$ to be an edge of $D$ that is \textit{not} part of the Hamiltonian path. We label the arcs of $D$ from $1,2,\ldots,n$, according to the order of their outgoing vertices. With this labeling in place, we define $\rho(i)$ to be the largest label used before the incoming vertex of the $i^{th}$ arc. With this definition in place, we return an $n$-tuple of the form $(\rho(1),\ldots,\rho(n))$. 
\begin{example}
Consider the following graph:
\[\begin{tikzcd}[column sep=scriptsize]
	{\bullet_1} & {\bullet_2} & \bullet & {\bullet_3} & {\bullet_4} & \bullet & \bullet & {\bullet_5} & \bullet & \bullet
	\arrow[from=1-1, to=1-2]
	\arrow[curve={height=-12pt}, from=1-1, to=1-3]
	\arrow[from=1-2, to=1-3]
	\arrow[curve={height=-12pt}, from=1-2, to=1-6]
	\arrow[from=1-3, to=1-4]
	\arrow[from=1-4, to=1-5]
	\arrow[curve={height=-18pt}, from=1-4, to=1-9]
	\arrow[from=1-5, to=1-6]
	\arrow[curve={height=-12pt}, from=1-5, to=1-7]
	\arrow[from=1-6, to=1-7]
	\arrow[from=1-7, to=1-8]
	\arrow[from=1-8, to=1-9]
	\arrow[curve={height=-12pt}, from=1-8, to=1-10]
	\arrow[from=1-9, to=1-10]
\end{tikzcd}\]
This graph yields the $n$-tuple $(2,4,5,4,5)$. Indeed, we label the arcs 1 through 5 and $\rho(i)$ is the largest label left of the right endpoint of the $i^{th}$ arc.
\end{example}

 First, we observe that any $n$-tuple with the property that $\rho(i)\geq i$ will yield a graph on $2n$ vertices with $\text{deg}(s)=\text{deg}(t)=2$ and $\text{deg}(v)=3$ for all $v\neq s,t$, up to reordering the incoming vertices with the same assignment of $\rho$. However, notice that reordering vertices with the same assignment of $\rho$ will not change the value of $\mu$ for any $o_i$. Therefore, we may choose the order by which such arcs fall. For the set of incoming vertices sharing the same assignment of $\rho$, we shall always choose the ordering according to the increasing order of the associated outgoing vertices. Now, the interval connected condition is equivalent to saying that there does not exist $i,k$ with $1\leq i\leq k \leq n$ and $|i-k|<n-1$  such that $\{j\in [1,n] : \rho(j)\in [i,k]\}=[i,\ldots,k]$. Therefore, given an $n$-tuple satisfying $\rho(i)\geq i$ and this interval requirement, we acquire a graph in our desired class.
\begin{example}
    Consider the string $(4,4,4,4)$. This will yield the following graph. 
    \[\begin{tikzcd}
	{\bullet_1} & {\bullet_2} & {\bullet_3} & {\bullet_4} & \bullet & \bullet & \bullet & \bullet
	\arrow[from=1-1, to=1-2]
	\arrow[curve={height=-30pt}, from=1-1, to=1-5]
	\arrow[from=1-2, to=1-3]
	\arrow[curve={height=-30pt}, from=1-2, to=1-6]
	\arrow[from=1-3, to=1-4]
	\arrow[curve={height=-30pt}, from=1-3, to=1-7]
	\arrow[from=1-4, to=1-5]
	\arrow[curve={height=-30pt}, from=1-4, to=1-8]
	\arrow[from=1-5, to=1-6]
	\arrow[from=1-6, to=1-7]
	\arrow[from=1-7, to=1-8]
\end{tikzcd}\]
\end{example}
\begin{remark}
    Any $n$-tuple of the form $(n,n,n,\ldots,n)$ will yield a graph which has the property that $\mu_D(o_i)=1$ for all $i$. Therefore, these graphs provide the minimum source-to-sink path count of $n+1$.
\end{remark}

\subsection{Optimizing tuples}
The next lemma will imply that there cannot be many consecutive incoming vertices. In particular, we show that if two arcs have the same image $i$ under $\rho$, then either the graph is suboptimal or one of these arcs was the $i^{\text{th}}$ one. Furthermore, the following proofs will all be of the same flavor: change the tuple in a way that increases the number of paths and show that the graph is still sufficiently connected after this change. 

\begin{lemma}\label{lem:doublelabel}
    If there exist $j_1<j_2\leq i-1$ such that $\rho(j_1)=\rho(j_2)=i$, then the corresponding graph is suboptimal. 
\end{lemma}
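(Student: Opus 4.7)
The plan is to exhibit an explicit local modification of $\rho$ that yields a tuple $\rho'$ encoding a graph in the same class but with strictly more source-to-sink paths. Specifically, set $\rho'(j_1) = i - 1$ and $\rho'(j) = \rho(j)$ for $j \neq j_1$; geometrically, this shortens arc $j_1$ by moving its incoming vertex back by one outgoing-bucket. Since $j_1 \leq j_2 - 1 \leq i - 2$, the inequality $\rho'(j_1) \geq j_1$ is automatic, so condition (a) is preserved.

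The first task is to verify that $\rho'$ still satisfies condition (b): no proper interval $[i', k']$ with $k' - i' < n - 1$ has $\{j : \rho'(j) \in [i', k']\} = [i', k']$. Since only $j_1$'s image changes, this set differs from the original only when exactly one of $i - 1, i$ lies in $[i', k']$, which forces either $[i', k'] = [i', i - 1]$ or $[i', k'] = [i, k']$. In the first case, a new forbidden configuration would require $\{j : \rho(j) \in [i', i - 1]\} = [i', i - 1] \setminus \{j_1\}$; since $i' \leq j_1 < j_2 \leq i - 1$, this set would contain $j_2$, forcing $\rho(j_2) \in [i', i - 1]$ and contradicting $\rho(j_2) = i$. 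In the second case, the forbidden configuration would force $\{j : \rho(j) \in [i, k']\} = [i, k'] \cup \{j_1\}$; but $\rho(j_2) = i \in [i, k']$ places $j_2$ in this set, while $j_2 \leq i - 1$ and $j_2 \neq j_1$ exclude it, contradiction.

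The second task is to show $\mu_{D'}(t) > \mu_D(t)$. Writing $f(k) := \mu_D(o_k)$, $f'(k) := \mu_{D'}(o_k)$, and $\delta(k) := f'(k) - f(k)$, the recurrence $f(k+1) = f(k) + \sum_{j : \rho(j) = k} f(j)$ (and likewise for $f'$) gives $\delta(k) = 0$ for $k < i$ and $\delta(i) = f(j_1) > 0$, since bucket $i - 1$ in $\rho'$ has gained $j_1$, adding $f(j_1)$ to $f(i)$. For $k \geq i$, $\delta$ satisfies a recurrence whose inputs are all non-negative, so $\delta$ is non-negative and non-decreasing. A short case split on whether $\rho(i) = i$ or $\rho(i) > i$, together with the edge case $i = n$ where bucket $n$ itself loses $j_1$, yields $\mu_{D'}(t) - \mu_D(t) \geq f(j_1) > 0$ in every scenario.

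The main obstacle is the first subcase of the connectivity check: one must use $\rho(j_2) = i$ with $j_2 \leq i - 1$ to force $j_2 \in [i', i - 1]$ while its image lies outside, providing the contradiction. This is precisely where the \emph{pair} of arcs $j_1, j_2$ sharing the image $i$ is essential -- without the second arc $j_2$, the modification could genuinely break 3-edge connectivity.
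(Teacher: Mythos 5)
Your proposal is correct and follows essentially the same route as the paper: decrement $\rho(j_1)$ to $i-1$, check that conditions (a) and (b) survive, and observe that the path count strictly increases. Your connectivity verification is in fact more complete than the paper's (which only treats intervals ending at $i-1$, not those starting at $i$), and while your claim that $\delta$ is ``non-decreasing'' fails at the single step from $i$ to $i+1$ when $\rho(i)>i$, the case split you describe (or simply the identity $\mu(t)=1+\sum_k \mu(o_k)$ together with $\delta\geq 0$ and $\delta(i)>0$) closes that gap.
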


\begin{proof}
    Assume that there exist $j_1<j_2\leq i-1$ such that $\rho(j_1)=\rho(j_2)=i$. We claim that by changing $\rho(j_1)$ to $i-1$ we obtain a valid assignment with more paths. Indeed, this change can only increase $\mu$ on each vertex and strictly increases $\mu$ on $o_i$. Observe that we still satisfy $\rho(j)\geq j$ for all $j$. Also, the interval condition still holds because for $h\geq 0$ if $j_1\in [i-h-1,\ldots,i-1]$, then $j_2\in [i-h-1,\ldots,i-1]$ while $\rho(j_2)=i$. 
\end{proof}

 In the following statement and in Section~\ref{sec:opti}, we shall use ``0" to denote an outgoing vertex and ``1" to denote an incoming vertex. This result will limit the number of consecutive $0$'s and consecutive $1$'s on an extremizer with a Hamiltonian path.
 
\begin{lemma}\label{lem:01}
    Any 3-regular, 3-edge connected acyclic graph maximizing the number of source-to-sink paths on a directed Hamiltonian path, cannot contain three consecutive $0$'s or three consecutive $1$'s.
\end{lemma}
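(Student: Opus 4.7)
The plan is to reduce both cases to Lemma~\ref{lem:doublelabel} via the tuple encoding: the three consecutive $1$'s case directly, and the three consecutive $0$'s case after passing to the edge-reversed graph.

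For three consecutive incoming vertices $y_k, y_{k+1}, y_{k+2}$, I first note that since the source $s$ has indegree $0$ it is outgoing, so $k \geq 2$ and there is a well-defined outgoing vertex $o_M$ with largest label among those strictly preceding $y_k$ along the Hamiltonian path. Because none of $y_k, y_{k+1}, y_{k+2}$ is outgoing, $o_M$ remains the latest outgoing vertex before $y_{k+1}$ and before $y_{k+2}$. Let $j_1, j_2, j_3$ denote the labels of the three arcs ending at $y_k, y_{k+1}, y_{k+2}$; these labels are distinct because every outgoing vertex initiates exactly one arc. By the definition of $\rho$, we have $\rho(j_1) = \rho(j_2) = \rho(j_3) = M$. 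The inequality $\rho(j) \geq j$ forces $j_1, j_2, j_3 \leq M$, and distinctness allows at most one of them to equal $M$; hence two of them, say $j_a < j_b$, satisfy $j_b \leq M - 1$. Lemma~\ref{lem:doublelabel} applied with $i = M$ then yields suboptimality, contradicting the hypothesis.

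For three consecutive outgoing vertices $y_k, y_{k+1}, y_{k+2}$, I would apply the same argument to the reverse graph $D^{\mathrm{op}}$, obtained by reversing every edge of $D$. Edge reversal preserves acyclicity, the degree sequence, $3$-edge connectivity (an undirected property), condition (b) from the introduction (stated in terms of undirected cuts), and the existence of a directed Hamiltonian path (now oriented from $t$ to $s$); condition (a) holds in $D^{\mathrm{op}}$ after swapping the roles of source and sink. Path reversal gives a bijection between source-to-sink paths of $D$ and of $D^{\mathrm{op}}$, so $D^{\mathrm{op}}$ maximizes the path count if and only if $D$ does. Vertices outgoing in $D$ are incoming in $D^{\mathrm{op}}$, so $y_{k+2}, y_{k+1}, y_k$ (in the $D^{\mathrm{op}}$-Hamiltonian order) are three consecutive incoming vertices of $D^{\mathrm{op}}$, and the first case applied to $D^{\mathrm{op}}$ yields the desired contradiction.

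The main obstacle is simply the bookkeeping of the reversal step: one must check that every condition defining our class of graphs is invariant under edge reversal (or transforms predictably with source and sink swapped) so that Lemma~\ref{lem:doublelabel} legitimately applies to $D^{\mathrm{op}}$. With that in hand, the $0$'s case reduces immediately to the $1$'s case, and both forbidden patterns are ruled out.
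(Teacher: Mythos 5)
Your proposal is correct and takes essentially the same route as the paper: the three-consecutive-$1$'s case is reduced to Lemma~\ref{lem:doublelabel} by noting that the three arcs ending at consecutive incoming vertices share the same $\rho$-value $M$ while at least two of their labels are at most $M-1$, and the three-consecutive-$0$'s case is handled by reversing all edges and invoking the bijection on source-to-sink paths. Your write-up is in fact slightly more explicit than the paper's on both the distinctness/bounding of the labels $j_1,j_2,j_3$ and the invariance of the graph class under reversal.
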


\begin{proof}
    Suppose $D$ contains three consecutive incoming vertices coming from arcs $j_1<j_2<j_3$. then it must be that $j_1,j_2\leq i-1$ while $\rho(j_1)=\rho(j_2)=i$, for some $i\geq j_3$. By Lemma~\ref{lem:doublelabel}, we are suboptimal.
    
    Consider the reverse of $D$, denoted as $D_R$. This is the same underlying graph as $D$, but we switch the orientation on all edges. There is a natural bijection between source-to-sink paths of $D$ and $D_R$ by `traveling the path backwards', i.e. $(e_1,e_2,\ldots,e_k)$ becomes $(e_k,e_{k-1},\ldots,e_2,e_1)$. Suppose that $D$ has three consecutive outgoing vertices. Then, in the reverse of $D_R$, these three consecutive outgoing vertices become three consecutive incoming vertices. Therefore, we may apply the prescribed change in Lemma~\ref{lem:doublelabel} to increase the number of paths. This increases the number of paths in the reverse of $D_R$. Thus, it increases the number of paths in $D$. We conclude that if $D$ contains three consecutive outgoing vertices, then $D$ is suboptimal, which finishes the proof.
\end{proof}

\begin{remark}
    We remark that Lemma~\ref{lem:doublelabel} and Theorem~\ref{th:opt} still hold for 3-regular, 3-edge connected graphs under a mild adaptation of the construction, with the following conditions for $\rho$: 
    \begin{enumerate}
        \item[a.]$\rho(i)\geq i$, 
        \item[b.]$|\{j: \rho(j)\leq k\}|- k \geq 2$ for all $k<n$, 
    \item[c.] There does not exist a proper interval $[i,k]=\{j: \rho(j)\in [i,k]\}$.
    \end{enumerate} The proofs of Lemmas~\ref{lem:doublelabel} \& ~\ref{lem:01} then follow in the exact same way.

\end{remark}

\subsection{Supplementary Bounds and Conjectures}\label{subsec:supp}
Before moving on to the proof of Theorem~\ref{th:opt}, we use Theorem~\ref{th:Ham} and the tuples described above to give tight bounds for the connected and 2-edge connected cases.

\begin{corollary}\label{coro:conn}
    Let $D$ be a (not necessarily simple) 3-regular acyclic directed graph on $2n$ vertices with a unique source and a unique sink. Then $D$ has at most $9 \cdot 2^{n-3}$ source-to-sink paths. Furthermore, this bound is tight for all $n\geq 3$.
\end{corollary}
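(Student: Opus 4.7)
By Theorem~\ref{th:Ham} we may assume $D$ has a Hamiltonian path $v_1=s,\dots,v_{2n}=t$; set $\mu_k:=\mu_D(v_k)$, so $\mu_1=1$. For the lower bound, I would exhibit the family $D_n$ whose Hamiltonian path carries the vertex-type pattern $\mathrm{OII}\,(\mathrm{OI})^{n-3}\,\mathrm{OOI}$ (O = outgoing, I = incoming): both of $s$'s extra out-edges are sent to $v_2$ and $v_3$; each middle OI block carries a multi-edge $v_{2j}\to v_{2j+1}$ for $j=2,\dots,n-2$; and both extra in-arcs of $t$ come from $v_{2n-2}$ and $v_{2n-1}$. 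A direct computation along the path gives $\mu_3=3$, $\mu_{2j+1}=2\mu_{2j-1}$ through the middle, and $\mu_{2n}=3\mu_{2n-3}$, yielding $\mu_{2n}(D_n)=9\cdot 2^{n-3}$.

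For the upper bound, the plan is to induct on $n$ with the step $f(n)\le 2f(n-1)$. The base $n=3$ is a finite enumeration over O/I patterns on six vertices and their feasible arc placements, each giving at most $9$ paths. For the inductive step, I would locate in $D$ an \emph{interior doubling pair}---a consecutive pair $(v_{k-1},v_k)$ with $3\le k\le 2n-1$, $v_{k-1}$ outgoing, $v_k$ incoming, and a multi-edge $v_{k-1}\to v_k$, so that $\mu_k=2\mu_{k-1}=2\mu_{k-2}$. Contract this pair by deleting $v_{k-1},v_k$ together with all their incident edges and inserting the Hamiltonian edge $v_{k-2}\to v_{k+1}$; the result $D'$ is a 3-regular acyclic graph on $2(n-1)$ vertices with a Hamiltonian path, unique source $s$, and unique sink $t$. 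Propagating the inequality $\mu_{D'}(v_m)\ge \mu_D(v_m)/2$ forward along the new Hamiltonian path---which holds at the base $m=k+1$ by $(\mu_{k-2}+c)/(2\mu_{k-2}+c)\ge 1/2$ for any $c\ge 0$, and at every subsequent step by the triviality $a_i\ge b_i/2 \Rightarrow a_1+a_2\ge (b_1+b_2)/2$---yields $\mu_{D'}(t)\ge \mu_D(t)/2$. Applying the inductive hypothesis $\mu_{D'}(t)\le 9\cdot 2^{n-4}$ then closes the step.

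The most delicate part is handling graphs $D$ with \emph{no} interior doubling pair, that is, those whose only multi-edges are boundary pairs ($s\to v_2$ or $v_{2n-1}\to t$). In this regime, every interior incoming vertex receives its non-Hamiltonian in-arc from a non-adjacent outgoing vertex, and the arc supply from $s$ (capacity $2$) forces the prefix of consecutive incoming vertices after $s$ to have length at most $2$. The plan here is a separate finite case analysis on the possible arc layouts---bounding the middle growth of $\mu$ by the weaker strict inequality associated to non-adjacent arcs and combining this with $\mu_{2n}\le 3\mu_{2n-1}$ at the sink---to show directly that $\mu_{2n}<9\cdot 2^{n-3}$ in this restricted family, completing the proof.
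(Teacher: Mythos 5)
Your lower-bound construction is exactly the paper's extremizer (tuple $(2,2,3,\ldots,n-1,n+1,n+1)$, i.e.\ a factor of $3$ at each end and doublings in the middle), and your contraction step is sound where it applies: if an interior outgoing vertex $v_{k-1}$ sends a double edge to $v_k$, deleting the pair and propagating $\mu_{D'}\ge \mu_D/2$ forward along the Hamiltonian path is correct, and it gives $f(n)\le 2f(n-1)$ by a route genuinely different from the paper's. (The paper instead works with the arc-tuple $\rho$ and a local-improvement argument: decreasing any $\rho(i)$, i.e.\ shortening an arc, weakly increases every $\mu$ value, so the optimum is forced to be the explicit tuple above; no induction is needed.)

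The gap is your final case, graphs with no interior doubling pair. This is not a finite family --- it consists of all graphs whose interior arcs have length at least two, and its size grows with $n$ --- so ``a separate finite case analysis on the possible arc layouts'' cannot close it. More importantly, the inequality you want to lean on is false at the vertex level: non-adjacency of an arc does \emph{not} force strictly-sub-doubling growth. If $v_p$ and $v_{p+1}$ are both outgoing and $v_{p+2}$ is incoming with its extra in-arc coming from $v_p$ (an arc of length two, no multi-edge), then $\mu_{p+1}=\mu_p$ and hence $\mu_{p+2}=\mu_{p+1}+\mu_p=2\mu_{p+1}$, a full doubling. Such $\mathrm{OOI}$ blocks can be chained, so the ``loss'' relative to $3\cdot 2^{n-1}$ that you need (a factor of $4/3$) is not visible locally; it only emerges from a global accounting of where the spare out-arcs of the surplus outgoing vertices must land (on incoming vertices far ahead, which then grow by factors close to $1$). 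You would need either an amortized argument of that kind, or --- more cleanly --- the paper's exchange lemma that shortening any arc weakly increases all path counts, which lets you assume from the outset that every interior arc is as short as possible and collapses the problem to one explicit family. As written, the last third of your proof is a statement of the difficulty rather than a resolution of it.
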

\begin{proof}
    Let $D$ be a (not necessarily simple) 3-regular acyclic directed graph on $2n$ vertices with a unique source and a unique sink. Then, we may apply Lemma~\ref{lem:treesort} and Theorem~\ref{th:Ham} to suppose it has a Hamiltonian path. We adapt the tuple construction from this section to 3-regular graphs on a Hamiltonian path by creating an $(n+1)$-tuple corresponding to a graph on $2n+2$ vertices and merging the first two vertices and last two vertices to get $2n$ total vertices. Because $s$ and $t$ now have degree 3, we require that $\rho(1)\geq 2$ and $\rho(j)=n+1$ occurs at least twice, and $\rho(i)\geq i$ for all $i\in [n+1]$. These conditions permit that in the graph reconstruction, one can merge the first two outgoing vertices into a source $s$ and the final two incoming vertices into a sink $t$. To optimize, we analyze two situations. 
    \begin{itemize}
    \item[Case 1]: Suppose either $\rho(1)$ or $\rho(2)$ equals $n+1$. Because the first two vertices will be merged, we may assume $n+1=\rho(1)$. Then, the maximizer must come from the $(n+1)$-tuple $(n+1,2,3,4,\ldots,n,n+1)$. Otherwise, we could decrease an assignment of $\rho$ to increase paths. It is straightforward to verify that such a graph yields a source-to-sink path count of $2^n+1$. 
    \item[Case 2]: Suppose $\rho(1),\rho(2)<n+1$. Then, any optimal graph must have the property that $\rho(1)=2=\rho(2)$. Otherwise, we could decrease an assignment of $\rho(1)$ or $\rho(2)$ to increase paths. Thus, the reverse graph, $D_R$, also has the property that $\rho(1),\rho(2)< n+1$. Therefore, we may apply the same argument on the reverse graph to say that $\rho(1)=2=\rho(2)$ in $D_R$. In $D$, this means $\rho(n)=n+1=\rho(n+1)$, and we still have $\rho(1)=2=\rho(2)$. The only remaining constraint is that $\rho(i)\geq i$, and decreasing any assignment of $\rho$ increases source-to-sink paths. Hence, the maximizer must satisfy $\rho(i)=i$ for $2\leq i <n$. It is straightforward to verify that such a graph has $9 \cdot 2^{n-3}$ source-to-sink paths. Therefore, the result follows as desired.
    \end{itemize}
\end{proof}

\begin{example}
    The extremum of Corollary~\ref{coro:conn} on ten vertices with corresponding tuple $(2,2,3,4,6,6)$ can be viewed pictorially as follows: 
  \[\begin{tikzcd}[column sep=0.24in]
	{\bullet_{1,2}} & \bullet & \bullet & {\bullet_3} & \bullet & {\bullet_4} & \bullet & {\bullet_5} & {\bullet_6} & \bullet
	\arrow[from=1-1, to=1-2]
	\arrow[curve={height=-6pt}, from=1-1, to=1-2]
	\arrow[curve={height=-12pt}, from=1-1, to=1-3]
	\arrow[from=1-2, to=1-3]
	\arrow[from=1-3, to=1-4]
	\arrow[from=1-4, to=1-5]
	\arrow[curve={height=-6pt}, from=1-4, to=1-5]
	\arrow[from=1-5, to=1-6]
	\arrow[from=1-6, to=1-7]
	\arrow[curve={height=-6pt}, from=1-6, to=1-7]
	\arrow[from=1-7, to=1-8]
	\arrow[from=1-8, to=1-9]
	\arrow[curve={height=-12pt}, from=1-8, to=1-10]
	\arrow[from=1-9, to=1-10]
	\arrow[curve={height=-6pt}, from=1-9, to=1-10]
\end{tikzcd}\]
This graph has $3\cdot 2 \cdot 2 \cdot 3=36$ paths as desired.
\end{example}

\begin{corollary}\label{coro:2conn3reg}
    Let $D$ be a (not necessarily simple) 2-edge connected, 3-regular acyclic directed graph on $2n$ vertices with a unique source and a unique sink. Then $D$ has at most $2^{n}+1$ source-to-sink paths. Furthermore, this bound is tight for all $n\geq 1$.
\end{corollary}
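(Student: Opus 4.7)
The plan is to follow the blueprint of Corollary~\ref{coro:conn}, reducing to graphs with a Hamiltonian path via Theorem~\ref{th:Ham} and then optimizing over the $(n+1)$-tuple encoding under the added $2$-edge connectivity constraint. By Theorem~\ref{th:Ham} (specifically its Claim~3, which preserves $2$-edge connectivity), one may assume $D$ has a directed Hamiltonian path. Encode $D$ as a tuple $(\rho(1), \ldots, \rho(n+1))$ exactly as in the proof of Corollary~\ref{coro:conn}, so that $\rho(i) \geq i$, $\rho(1) \geq 2$, and at least two entries equal $n+1$.

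The next step is to translate $2$-edge connectivity into the tuple language. By the bridge characterization in Claim~3 of the proof of Theorem~\ref{th:Ham}, a $3$-regular graph on a Hamiltonian path is $2$-edge connected if and only if no Hamiltonian edge is a bridge, which is equivalent to saying that at every interior position some arc crosses. In the tuple language this becomes: for every $1 \leq j \leq n$, $\max_{i \leq j}\rho(i) > j$.

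Now follow the same case split used in Corollary~\ref{coro:conn}. In \textbf{Case~1} we have $\rho(1) = n+1$ or $\rho(2) = n+1$. Since the first two outgoing vertices merge into $s$, arcs $1$ and $2$ are symmetric, so WLOG $\rho(1) = n+1$. The same ``decreasing $\rho(i)$ weakly increases the number of paths'' argument from Corollary~\ref{coro:conn} forces the extremal tuple $(n+1, 2, 3, \ldots, n, n+1)$; a direct computation (arc~$1$ supplies one $s \to t$ path while each of the $n$ subsequent short arcs doubles the count along the Hamiltonian path) yields exactly $2^n + 1$ source-to-sink paths. Since arc~$1$ crosses every position, this graph is $2$-edge connected, which simultaneously establishes tightness for every $n \geq 1$ (for $n = 1$ it degenerates to the $3$-edge multigraph from $s$ to $t$, with $3 = 2^1+1$ paths).

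In \textbf{Case~2}, where $\rho(1), \rho(2) < n+1$, one must show $\mu(t) \leq 2^n + 1$. The $2$-edge constraint at $j = 2$ rules out the Corollary~\ref{coro:conn} extremizer with $\rho(1) = \rho(2) = 2$, and at $j = n$ it forces some $i \in [3,n]$ with $\rho(i) = n+1$, i.e.\ a ``long arc'' from an interior outgoing vertex to the sink region rather than from $s$. This is the main obstacle of the proof. The intuition to be made rigorous is that a Case~1 graph gets both a direct $s \to t$ edge (contributing $+1$) \emph{and} the freedom to set every other $\rho(i) = i$ (maximizing the doublings along the Hamiltonian path), while any Case~2 configuration must sacrifice one of these two advantages: shifting the long-arc endpoint from $s$ to an interior outgoing vertex destroys some doublings upstream of that vertex. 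I would make this precise either by a direct pointwise comparison of $\mu$ between a Case~2 tuple and the Case~1 extremizer, or by an inductive decomposition of the graph at the first position $i$ with $\rho(i)=n+1$, showing the prefix before $i$ contributes a multiplicative factor strictly smaller than what the long-arc-from-$s$ structure gives.
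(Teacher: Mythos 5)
Your setup (reduction to a Hamiltonian path via Theorem~\ref{th:Ham}, the $(n+1)$-tuple encoding with $\rho(i)\ge i$, $\rho(1)\ge 2$, two entries equal to $n+1$, and the translation of $2$-edge connectivity into ``no prefix $[1,k]$ with $\rho(i)\le k$ for all $i\le k$'') matches the paper exactly, and your Case~1 is complete and correct. The problem is Case~2: you identify it as ``the main obstacle,'' offer an intuition, and then list two strategies you \emph{would} pursue (a pointwise comparison of $\mu$ against the extremizer, or an induction on the first $i$ with $\rho(i)=n+1$) without executing either. Neither is obviously routine --- a pointwise comparison of $\mu$ between two structurally different tuples is not something the earlier machinery hands you --- so as written this is a genuine gap, not a deferred triviality.

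The paper closes Case~2 with a local exchange argument rather than a global comparison. Set $M=\rho(1)\ge\rho(2)$ (WLOG). The $2$-edge connectivity condition applied to the prefix $[1,M]$ produces some $j$ with $1<j\le M$ and $N=\rho(j)>M$. Swap the two values: set $\rho(1)=N$ and $\rho(j)=M$. Using the counting identity $\mu_D(o_i)=1+\sum_{k:\,\rho(k)\le i}\mu_D(\alpha_k)$, every sum that previously contained exactly one of the two arcs contained the one with the smaller $\mu$-value (namely arc $1$, since $\mu_D(o_1)=1<\mu_D(o_j)$) and now contains the larger one, so the total path count strictly increases; and since the swap permutes two incoming vertices it preserves all vertex types and hence $2$-edge connectivity. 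This shows any Case~2 tuple is suboptimal, forcing $\rho(1)=n+1$ at the optimum and reducing everything to your Case~1. If you want to complete your write-up, this swap is the missing step --- it is closer in spirit to your second proposed strategy but is a single strictly-improving move rather than an induction.
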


\begin{proof}
    Let $D$ be a (not necessarily simple) 2-edge connected, 3-regular acyclic directed graph on $2n$ vertices with a unique source and a unique sink. As in the previous proof, we may apply Lemma~\ref{lem:treesort} and Theorem~\ref{th:Ham} and adapt the construction before Example 1. Now, we require: $\rho(i)\geq i$, $\rho(1)\geq 2$, $\rho(i)=n+1$ occurs at least twice, and there does not exist some integer $1\leq k \leq n$ such that $\rho(i)\leq k$ for all $i\leq k$. Indeed, the last condition is because $D$ is $2$-edge connected. We shall argue that if $\rho(1),\rho(2)<n+1$, then $D$ is suboptimal. Without loss of generality, assume $M=\rho(1)\geq \rho(2)$. Then, because $D$ is 2-edge connected on a Hamiltonian path, there must exist some $1<j\leq M$ such that $N=\rho(j)>M$. Swap the assignments of $\rho$ between $1$ and $j$. That is, replace $M$ with $N$ in $\rho(1)$ and make $\rho(j)=M$. Recall that the number of source-to-sink paths can be counted as $3+\sum_{i=2}^{n}\mu_D(o_i)$. Furthermore, by counting paths according to the final arc traveled, we acquire \begin{equation}\label{eq:labelcount}
    \mu_D(o_{i})=\mu_D(\alpha_{i+1})=1+\sum_{k:\rho(k)\leq i}\mu_D(\alpha_k)
    \end{equation}
    
for $i>1$ with $\alpha_k$ denoting the $k^\text{th}$ arc. 
By equation~\eqref{eq:labelcount}, such a swap between $M$ and $N$ can only increase the number of source-to-sink paths. Indeed, if a sum had just one of these two arcs as a summand, it used to have the one with the smaller value of $\mu$. After the $\rho$-swap, any such sum will now have the one with the larger value of $\mu$, associated with the new $\rho(j)=M$. 
Furthermore, this operation swaps the positions of two incoming vertices, therefore it preserves the vertex type of all vertices in the total order. Therefore, $D$ remains 2-edge connected. Additionally, because in an optimum $D$, we have $\mu_D(o_j)\geq \mu_D(o_2)>1=\mu_D(o_1)$, such a change will strictly increase the number of paths to $o_M$ rendering $D$ suboptimal. With this restriction that $n+1=\rho(1)$, the best possible $(n+1)$-tuple is $(n+1,2,3,\ldots,n,n+1)$. It is straightforward to verify that this graph is 2-edge connected and has $2^n+1$ source-to-sink paths.
\end{proof}

\begin{example}
    The extremum of Corollary~\ref{coro:2conn3reg} on eight vertices with corresponding tuple given by $(5,2,3,4,5)$ can be viewed pictorially as follows: 
  \[\begin{tikzcd}
	{\bullet_{1,2}} & \bullet & {\bullet_3} & \bullet & {\bullet_4} & \bullet & {\bullet_5} & \bullet
	\arrow[from=1-1, to=1-2]
	\arrow[curve={height=-6pt}, from=1-1, to=1-2]
	\arrow[shift right, curve={height=12pt}, from=1-1, to=1-8]
	\arrow[from=1-2, to=1-3]
	\arrow[from=1-3, to=1-4]
	\arrow[curve={height=-6pt}, from=1-3, to=1-4]
	\arrow[from=1-4, to=1-5]
	\arrow[from=1-5, to=1-6]
	\arrow[curve={height=-6pt}, from=1-5, to=1-6]
	\arrow[from=1-6, to=1-7]
	\arrow[from=1-7, to=1-8]
	\arrow[curve={height=-6pt}, from=1-7, to=1-8]
\end{tikzcd}\]
Indeed, this graph has $2^4+1=17$ paths as desired. 
\end{example}

For simple graphs, we expect that the best way to maximize the path count is to repeat blocks of four vertices that each consist of two overlapping acyclic triangles. Such blocks contribute a factor of three to the path count, which leads to the factor(s) of $\sqrt{3}$ in our conjectures. In the connected case, we simply make the additional edges connected to $s$ and $t$ as short as possible. Meanwhile, in the 2-edge connected case, we use an $s-t$ edge to ensure connectivity just like in Corollary~\ref{coro:2conn3reg}. These expectations lead to the following conjectures.

\begin{conjecture}\label{conj:simpconn3reg}
    Let $D$ be a simple, 3-regular acyclic directed graph on $2n$ vertices with a unique source and a unique sink. Then, $D$ has at most $16 \cdot (\sqrt{3})^{n-5}$ source-to-sink paths. Furthermore, this bound is tight for odd values of $n\geq 5$.
\end{conjecture}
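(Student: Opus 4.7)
The plan is to follow the template of Corollaries~\ref{coro:conn} and \ref{coro:2conn3reg}. First, apply Lemma~\ref{lem:treesort} and Theorem~\ref{th:Ham} to reduce to the case where $D$ has a directed Hamiltonian path (the theorem guarantees simplicity is preserved). Adapt the tuple construction to allow degree $2$ at $s$ and $t$ using the $(n+1)$-tuple convention from the proof of Corollary~\ref{coro:conn}, and translate simplicity into two additional constraints on $\rho$: no arc connects two consecutive Hamiltonian-path vertices, and no two arcs share both endpoints.

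Second, identify the conjectured extremizer. The building block is a four-vertex \emph{double-triangle} unit with Hamiltonian edges $v_1 \to v_2 \to v_3 \to v_4$ together with the arcs $(v_1,v_3)$ and $(v_2,v_4)$. Internally this block has three paths from $v_1$ to $v_4$, so concatenating $k$ such blocks yields a factor of $3^k = (\sqrt{3})^{2k}$ in the path count. For odd $n \geq 5$, the proposed optimum is a chain of these blocks bracketed by specific boundary gadgets at $s$ and $t$ that respect the degree-$2$ condition and simplicity while contributing exactly the constant $16$; a finite case check on small $n$ would establish the base case.

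Third, prove optimality through a sequence of local-swap lemmas on the tuple in the spirit of Lemmas~\ref{lem:doublelabel} and \ref{lem:01}. The strategy is: whenever the vertex-type sequence deviates from the pattern $(0101\cdots)$ expected of a chain of double-triangle blocks, produce a swap of two $\rho$-values that weakly increases every $\mu_D(o_i)$ via equation~\eqref{eq:labelcount}, strictly increases the number of paths, and preserves simplicity together with the boundary conditions. One would then verify in a fixed-size window, analogous to the block decomposition used in Section~\ref{sec:opti} for the proof of Theorem~\ref{th:opt}, that the double-triangle pattern attains the largest growth rate $\sqrt{3}$ per vertex among all locally admissible simple configurations.

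The main obstacle, and the reason this remains a conjecture, is that simplicity is a global constraint that is much harder to preserve under local swaps than either $2$- or $3$-edge connectivity: a swap that respects simplicity inside a window can easily create a duplicate arc outside it, so the local-to-global arguments used in Corollary~\ref{coro:2conn3reg} and Theorem~\ref{th:opt} do not transfer directly. Worse, the conjectured base $\sqrt{3}$ is very close to what naive modifications already achieve, so one cannot afford any constant-factor slack in the local analysis; an eventual proof will likely require a finer classification of optimal local patterns than the computational search of Section~\ref{sec:opti} and a delicate argument upgrading block-wise optimality to a global bound.
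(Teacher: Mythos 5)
The statement you are trying to prove is stated in the paper only as a conjecture (Conjecture~\ref{conj:simpconn3reg}); the paper offers no proof, only the conjectured extremal family and a verification on ten vertices. So there is nothing in the paper for your argument to diverge from, and your proposal should be judged on its own terms: it is a plan, not a proof, and you are right to say so. Your steps 1 and 2 are sound and match the paper's setup exactly: Theorem~\ref{th:Ham} does preserve simplicity, the $(n+1)$-tuple adaptation from Corollary~\ref{coro:conn} is the right encoding, and your double-triangle block (Hamiltonian edges $v_1\to v_2\to v_3\to v_4$ plus arcs $(v_1,v_3)$ and $(v_2,v_4)$) is precisely the ``two overlapping acyclic triangles'' the paper describes; the paper's conjectured family $(3,3,3,5,5,\ldots,n-2,n-2,n+1,n+1,n+1)$ is a chain of such blocks with boundary adjustments at $s$ and $t$, and the constant $16=3^2+2\cdot 3+1$ does factor out of the count for general odd $n$. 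Your diagnosis of the obstruction — that simplicity is a global constraint which the local $\rho$-swaps of Lemmas~\ref{lem:doublelabel} and \ref{lem:01} do not respect, and that there is no constant-factor slack to absorb errors — is a fair account of why the statement remains open.

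Two corrections to the sketch itself. First, the vertex-type sequence of a chain of double-triangle blocks is $(0011)^k$, not $(0101\cdots)$: in each block $v_1$ and $v_2$ are outgoing and $v_3$ and $v_4$ are incoming. (The $0101$ pattern belongs to the Fibonacci extremizer of Conjecture~\ref{conj:3conn3reg}; note also that the conjectured connected-case optimum contains three consecutive $0$'s and three consecutive $1$'s near $s$ and $t$, so Lemma~\ref{lem:01} — which is proved only for $3$-edge connected graphs — cannot be invoked here to constrain the type sequence.) Second, and more substantively, step 3 is where the entire mathematical content lies and it is currently empty: you have not exhibited even a single swap lemma that strictly increases the path count while preserving simplicity, nor a candidate finite window size for the local analysis. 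Until at least one such lemma is established, the proposal does not advance the conjecture beyond where the paper leaves it.
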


\begin{example} The family of graphs that we expect to be optimal in Conjecture~\ref{conj:simpconn3reg} consists of tuples of the form $(3,3,3,5,5,\ldots,n-2,n-2,n+1,n+1,n+1)$.
The following graph with tuple $(3,3,3,6,6,6)$ is the conjectured optimum for Conjecture~\ref{conj:simpconn3reg} on ten vertices.
   \[\begin{tikzcd}[column sep=scriptsize]
	\bullet_{1,2} & \bullet_3 & \bullet & \bullet & \bullet & \bullet_4 & \bullet_5 & \bullet_6 & \bullet & \bullet
	\arrow[from=1-1, to=1-2]
	\arrow[curve={height=12pt}, from=1-1, to=1-3]
	\arrow[curve={height=-12pt}, from=1-1, to=1-4]
	\arrow[from=1-2, to=1-3]
	\arrow[curve={height=-12pt}, from=1-2, to=1-5]
	\arrow[from=1-3, to=1-4]
	\arrow[from=1-4, to=1-5]
	\arrow[from=1-5, to=1-6]
	\arrow[from=1-6, to=1-7]
	\arrow[curve={height=-12pt}, from=1-6, to=1-9]
	\arrow[from=1-7, to=1-8]
	\arrow[curve={height=-12pt}, from=1-7, to=1-10]
	\arrow[from=1-8, to=1-9]
	\arrow[curve={height=12pt}, from=1-8, to=1-10]
	\arrow[from=1-9, to=1-10]
\end{tikzcd}\]
Indeed, for this example, we acquire $3^2+2\cdot 3 + 1=16$ source-to-sink paths.
\end{example}

\begin{conjecture}\label{conj:simple2conn}
    Let $D$ be a simple, 2-connected, 3-regular acyclic directed graph on $2n$ vertices with a unique source and a unique sink. Then $D$ has at most $(\sqrt{3})^n+1$ source-to-sink paths. Furthermore, this bound is tight for even values of $n\geq 2$.
\end{conjecture}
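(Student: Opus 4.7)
By Theorem~\ref{th:Ham}, which preserves both simplicity and 2-edge connectivity, one may restrict attention to graphs on a directed Hamiltonian path and encode each such graph by an $(n+1)$-tuple $\rho$ adapted as in the proof of Corollary~\ref{coro:2conn3reg}. Simplicity imposes an additional restriction on $\rho$: any arc whose outgoing and incoming vertices are Hamiltonian neighbors would duplicate a Hamiltonian edge, and such near-diagonal values of $\rho$ must be excluded. 2-edge connectivity remains equivalent to the condition that some arc crosses every interior position of the Hamiltonian path.

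The first step is to show that any optimal tuple satisfies $\rho(1)=n+1$, encoding an $s$-$t$ arc that contributes the ``$+1$'' in the bound. The swap argument from Corollary~\ref{coro:2conn3reg} adapts: if $\rho(1)=M<n+1$, then 2-edge connectivity forces some $\rho(j)=N>M$ with $j\leq M$, and equation~\eqref{eq:labelcount} shows that exchanging $\rho(1)$ and $\rho(j)$ strictly increases the path count. The swap preserves the vertex-type sequence along the Hamiltonian path (hence 2-edge connectivity), and a short case analysis---exploiting simplicity of the original graph together with the Lemma~\ref{lem:doublelabel}-type restriction on duplicated $\rho$-values at an optimum---confirms that the newly shortened arc cannot become a length-one duplicate of a Hamiltonian edge.

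Fixing $\rho(1)=n+1$, the remaining count equals $1+P$, where $P$ is the number of source-to-sink paths avoiding the $s$-$t$ arc. The goal becomes $P\leq (\sqrt{3})^n$, matching the per-vertex growth rate of the ``double triangle'' pattern (two overlapping acyclic triangles on four consecutive Hamiltonian vertices, contributing a factor of $3$ per block). I would attack this multiplicative bound by a block/transfer-style argument analogous to the one used for Theorem~\ref{th:opt} in Section~\ref{sec:opti}: decompose the path into consecutive windows, classify all admissible local tuple patterns under simplicity and Lemmas~\ref{lem:doublelabel} and~\ref{lem:01} (which transfer to the simple 2-edge connected setting), and verify that each length-four window multiplies the running values of $\mu_D(y_i)$ by at most $3$. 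For tightness at even $n\geq 2$, an explicit construction made of $n/2$ double-triangle blocks tiling the Hamiltonian path together with the $s$-$t$ arc achieves exactly $3^{n/2}+1$ paths by direct computation.

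The main obstacle is precisely this multiplicative bound. Simplicity permits a variety of admissible local patterns (isolated triangles, staggered short arcs, mixed blocks), and ruling out anything that beats the double-triangle growth rate of $\sqrt{3}$ per vertex appears to require either a cleverly chosen monovariant or a finite (possibly computer-assisted) enumeration of small windows, much as in the proof of Theorem~\ref{th:opt}. A secondary subtlety is the boundary matching between adjacent blocks and the parity restriction: for odd $n$ no perfect tiling by four-vertex blocks exists, so one must carefully account for why $(\sqrt{3})^n+1$ is strict (not achieved) in that case while remaining tight for even $n\geq 2$.
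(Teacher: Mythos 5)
This statement is stated in the paper as Conjecture~\ref{conj:simple2conn}; the paper offers no proof of it, only the conjectured extremal family $(n+1,3,3,5,5,\ldots,n-1,n-1,n+1,n+1)$ and the verification on eight vertices in Example~\ref{ex:8}. So there is no paper argument to compare against, and your proposal does not close the gap either: by your own admission, the central inequality $P\leq(\sqrt{3})^{n}$ for the paths avoiding the $s$--$t$ arc is left as ``the main obstacle,'' and that inequality \emph{is} the conjecture. The surrounding scaffolding you describe is reasonable and consistent with the paper: Theorem~\ref{th:Ham} does preserve simplicity and $2$-edge connectivity, simplicity does force every arc to have length at least two, the tightness construction of $n/2$ double-triangle blocks plus an $s$--$t$ arc matches Example~\ref{ex:8} and gives $3^{n/2}+1$, and the swap argument of Corollary~\ref{coro:2conn3reg} plausibly forces $\rho(1)=n+1$ (though the case analysis you defer --- checking that the swap never creates a duplicated Hamiltonian edge or a second $s$--$t$ arc --- is a real obligation, not a formality).

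The specific strategy you propose for the hard step is also unlikely to work as stated. A window-by-window argument showing that ``each length-four window multiplies $\mu$ by at most $3$'' cannot succeed, because local growth can exceed the global average rate: this is exactly why the paper's proof of Theorem~\ref{th:opt} needs blocks of $35$--$40$ vertices and a computer search, and even then it only certifies $1.6779$ against a conjectured optimum near $\varphi\approx 1.618$. Transplanting that machinery to the simple, $2$-edge connected setting would at best yield a bound of the form $c\cdot\lambda^{n}$ for some $\lambda$ strictly larger than $\sqrt{3}\approx 1.732$, not the sharp constant $\sqrt{3}$ with the exact additive $+1$. Proving the conjecture as stated would require a genuinely different idea (an exact monovariant, an injection into a $3^{n/2}$-sized set, or the like), none of which appears in the proposal.
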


\begin{example}\label{ex:8} The family of graphs that we expect to be optimal in Conjecture~\ref{conj:simple2conn} consists of tuples of the form $(n+1,3,3,5,5,\ldots,n-1,n-1,n+1,n+1)$.
    The following graph with tuple $(5,3,3,5,5)$ is our conjectured extremum on eight vertices. 
   \[\begin{tikzcd}
	\bullet_{1,2} & \bullet_3 & \bullet & \bullet & \bullet_4 & \bullet_5 & \bullet & \bullet
	\arrow[from=1-1, to=1-2]
	\arrow[curve={height=-12pt}, from=1-1, to=1-3]
	\arrow[curve={height=12pt}, from=1-1, to=1-8]
	\arrow[from=1-2, to=1-3]
	\arrow[curve={height=-12pt}, from=1-2, to=1-4]
	\arrow[from=1-3, to=1-4]
	\arrow[from=1-4, to=1-5]
	\arrow[from=1-5, to=1-6]
	\arrow[curve={height=-12pt}, from=1-5, to=1-7]
	\arrow[from=1-6, to=1-7]
	\arrow[curve={height=-12pt}, from=1-6, to=1-8]
	\arrow[from=1-7, to=1-8]
\end{tikzcd}
\]
Indeed, this example has $3^2 + 1=10$ source-to-sink paths as desired. 
\end{example}

\section{Proof of Theorem~\ref{th:opt}}~\label{sec:opti}

\subsection{An Integer Programming Model}
Following these results, the candidate graphs for maximal number of paths are few enough that we can solve the problem in reasonable time for graphs of less than 40 vertices, and join the solutions together to generalize for larger $n$. We introduce Model~\ref{mod:2.5conn} to maximize the number of source-to-sink paths for the specific class of graphs used in Lemma~\ref{lem:01}. In this model, the $x_i$ represent the number of paths from vertex $1$ to vertex $i$, and the $a_{i,j}$ represent whether there is an edge from $i$ to $j$, $a_{i,j}=1$, or not, $a_{i,j}=0$. In other words, we are maximizing the total number of source-to-sink paths, $x_n$, by optimizing the edge assignment, while respecting a set of constraints guaranteeing the validity of the result.

\begin{subequations}\label{mod:2.5conn}
\begin{align}
    \max\;\; & x_n && \nonumber\\ 
    \text{s.t.}\;\; & \displaystyle a_{i,i+1}=1 && \forall i,=0,\dots,n,\label{eq:1}\\
    & \displaystyle \sum_{j=0}^{i-1} a_{j,i}+\sum_{i+1}^{n+1} a_{i,j}=3&& \forall i=1,\dots,n
    \label{eq:2}\\
    & a_{j,i}=0 && \forall i,j=0,\dots,n+1,\, i\leq j \label{eq:3}\\
    & x_i=\sum_{j=0}^{i-1} x_ja_{j,i}  && \forall i=1,\dots,n,\label{eq:4}\\
    & \sum_{k=i}^{j}\left(\sum_{l=j+1}^{n+1}a_{k,l}+\sum_{l=0}^{i-1} a_{l,k}\right)\geq 3 && \forall i=1,\ldots,n-1\, ,j=2,\dots,n,\, i<j \label{eq:5}\\
    & x_0=x_1=1\label{eq:6}\\
    & x_i \in \mathbb{Z} && \forall i=0,\ldots,n\\
    & a_{i,j} \in \{0,1\} && \forall i,j=0,\ldots,n+1
\end{align}
\end{subequations}

As explained in more detail in the proof of Lemma~\ref{lemma:correct}, the different constraints correspond to specific properties our graph must have. Constraints~\eqref{eq:1} correspond to the existence of the Hamiltonian path (there is an edge from vertex $i$ to $i+1$ for all possible $i$). Constraints~\eqref{eq:2} ensure 3-regularity, each vertex has in total 3 incoming or outgoing edges. Finally, Constraints~\eqref{eq:5} guarantee that the graph is 3-edge connected, using the characterization from Proposition~\ref{3conn}. It also enforces that there is at least one edge outside the Hamiltonian path connected to vertex $0$ or vertex $n$. This last element guarantees that if we consider a block of consecutive vertices as the output of the model, then deleting the edges on the Hamiltonian path is not sufficient to disconnect the graph. 

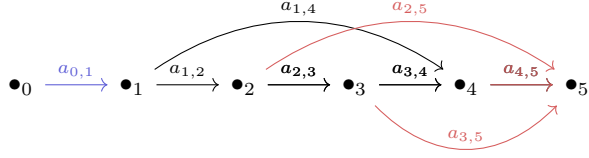
\begin{figure}[h]
  \[\begin{tikzcd}
	{\bullet_0} & {\bullet_1} & {\bullet_2} & {\bullet_3} & {\bullet_4} & {\bullet_5} 
	\arrow[color={rgb,255:red,92;green,92;blue,214}, from=1-1, to=1-2, "a_{0,1}"]
	\arrow[from=1-2, to=1-3,"a_{1,2}"]
    \arrow[from=1-3, to=1-4, "a_{2,3}"]
    \arrow[from=1-4, to=1-5, "a_{3,4}"]
    \arrow[from=1-5, to=1-6, "a_{4,5}"]
    \arrow[ curve={height=-30pt}, from=1-2, to=1-5, "a_{1,4}"]
    \arrow[from=1-3, to=1-4, "a_{2,3}"]
    \arrow[color={rgb,255:red,214;green,92;blue,92}, curve={height=-30pt}, from=1-3, to=1-6, "a_{2,5}"]
    \arrow[color={rgb,255:red,214;green,92;blue,92}, curve={height=30pt}, from=1-4, to=1-6, "a_{3,5}"]
    \arrow[from=1-4, to=1-5, "a_{3,4}"]
    \arrow[color={rgb,255:red,214;green,92;blue,92}, from=1-5, to=1-6, "a_{4,5}"] 
\end{tikzcd}\]
\caption{The $a_{i,j}$ displayed correspond to those equal to 1: we have chosen these arcs to be in the graph. Each $x_i$ corresponds to the number of paths leading to vertex $i$: here, $x_0=x_1=x_2=x_3=1$, $x_4=2$ and $x_5=3$. Vertices $0$ and $5$ respectively represent any vertices that come before vertex 1 or after vertex 4. On this small example, only arc $(1,4)$ is contained in the block of vertices and is not on the Hamiltonian path.}
\end{figure}

\subsection{Technical remarks}
 Vertices $0$ and $n+1$ are not real vertices but represent respectively any vertex that could come before vertex 1 or after vertex $n$. While these dummy vertices are not relevant for the first and last blocks of the construction, they are essential in the construction of the smaller inner blocks. Indeed, we will be trying to bound the worst increase in the number of paths that can happen on a block of consecutive vertices. These dummy vertices represent arcs coming from vertices before (dummy vertex 0), or going to vertices after (dummy vertex $n+1$), the block we optimize on. Note that setting the number of paths to vertex 0, $x_0$, equal to the number of paths to vertex 1, $x_1$, gives an upper-bound: this is the highest possible value for $x_0$ as the number of paths is monotonically increasing along the Hamiltonian path.

 For clarity, we write the model with a product of $a_{i,j}$, the binary variable indicating whether there is an edge from node $i$ to node $j$, and $x_j$, the (bounded) variable counting the number of paths from node $1$ to node $j$. This could be linearized using the bound on $x_j$ to keep this as an Integer Program. Finally, values were computed using Gurobi 10.0.1, implemented in Julia using the JuMP package.

\subsection{Theoretical guarantees}
\begin{lemma}\label{lemma:correct}
    Model~\ref{mod:2.5conn} computes the maximal number of paths from source to sink for a 3-regular, 3-edge connected graph with a Hamiltonian path and a dummy source vertex.
\end{lemma}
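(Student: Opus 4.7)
The plan is to establish a bijection between feasible solutions $(a_{i,j}, x_i)$ of Model~\ref{mod:2.5conn} and the set of graphs in the intended family, and then show that the objective $x_n$ equals the number of source-to-sink paths for the corresponding graph. Given a feasible assignment, I will build a directed graph $D$ on the vertex set $\{0, 1, \ldots, n, n+1\}$ by drawing an arc $i \to j$ whenever $a_{i,j}=1$. Conversely, every admissible graph yields an assignment in the obvious way. The bulk of the argument consists of verifying that each family of constraints captures the correct structural property.

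First, I would check the easier structural correspondences. Constraint~\eqref{eq:3} forces $a_{j,i}=0$ whenever $i\leq j$, so every arc respects the ordering and $D$ is acyclic; combined with Constraint~\eqref{eq:1}, which guarantees $a_{i,i+1}=1$, we obtain the directed Hamiltonian path $0\to 1 \to \cdots \to n+1$. Constraint~\eqref{eq:2} fixes the total degree of each non-dummy vertex to $3$, yielding 3-regularity on $\{1,\ldots,n\}$, while the dummy vertices $0$ and $n+1$ play the role of unconstrained source and sink placeholders consistent with the block-decomposition discussion preceding the lemma. Constraints~\eqref{eq:4} and~\eqref{eq:6} define $x_i$ by the standard recursion for counting directed paths: $x_0=x_1=1$, and $x_i$ is the sum of $x_j$ over all predecessors $j<i$ of $i$. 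A straightforward induction on $i$ then shows that $x_i$ equals the number of directed paths from vertex $0$ to vertex $i$ in $D$, so $x_n$ is exactly the number of source-to-sink paths when we attach the dummy source before vertex $1$.

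The heart of the argument is to verify that Constraint~\eqref{eq:5} is equivalent to the 3-edge connectivity of $D$. For this I will invoke Proposition~\ref{3conn}: a 3-regular acyclic graph on a directed Hamiltonian path fails to be 3-edge connected if and only if either some initial segment contains more indegree-two than outdegree-two vertices, or there exists a proper interval $[y_i,y_j]$ of the Hamiltonian path with at most two edges connecting it to its complement. The expression
\[
\sum_{k=i}^{j}\Bigl(\sum_{l=j+1}^{n+1} a_{k,l} + \sum_{l=0}^{i-1} a_{l,k}\Bigr)
\]
counts exactly the edges of $D$ that have precisely one endpoint in the interval $[i,j]$, so requiring this quantity to be $\geq 3$ for every proper interval rules out the second forbidden structure. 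I will then argue that the initial-segment obstruction from Proposition~\ref{3conn} is also captured: when $i=1$, the term $\sum_{l=0}^{0}a_{l,k}$ accounts for the incoming arcs from the dummy source, and the constraint applied to intervals of the form $[1,j]$ precisely encodes that no prefix has an excess of indegree-two vertices, by the edge-counting argument used in Case 2 of the proof of Proposition~\ref{3conn}.

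The main obstacle, and where I would spend the most care, is the handling of the dummy vertices $0$ and $n+1$ inside Constraint~\eqref{eq:5}. These vertices stand in for the rest of a larger graph in the block-decomposition scheme, and one must check that the constraint gives a valid \emph{upper bound} on the number of source-to-sink paths usable by any such block, not merely an exact count for a standalone graph. I would confirm this by observing that $x_0=x_1=1$ is the largest value compatible with the monotone growth of path counts along the Hamiltonian path (as stated in the technical remarks), and that treating every arc incident to a dummy vertex as either available or absent correctly overcounts rather than undercounts the contribution of external structure. Once both directions of the bijection are established and the objective is shown to be the path count, the lemma follows.
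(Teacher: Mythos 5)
Your proposal is correct and follows essentially the same route as the paper's proof: verify constraint by constraint that \eqref{eq:1}--\eqref{eq:3} encode the Hamiltonian path, 3-regularity, and acyclicity, that \eqref{eq:4} and \eqref{eq:6} give the path-count recursion by induction, and that \eqref{eq:5} encodes 3-edge connectivity via Proposition~\ref{3conn}. Your treatment of the interval constraint is, if anything, slightly more explicit than the paper's about why both forbidden structures of Proposition~\ref{3conn} are excluded, but the argument is the same.
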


\begin{proof}
    We first verify that the number of paths is correctly counted by $x_i$ (there is no $x_{n+1}$). We have that $x_0=1$ and $x_1=1$, which are the correct counts as the only edge between these two vertices is the one associated with the Hamiltonian path. For any vertex after the first, the number of paths is given by Constraint~\eqref{eq:4}: the number of paths $x_i$ to get to $i$ corresponds to the sum of the number of paths to get to any previous vertex $j$, $x_j$, times the boolean variable $a_{j,i}$ representing the existence of the edge $(j,i)$. This is exactly the sum over all edges incoming to $i$ of the number of paths that led to the sources of the edges. By induction, the $x_i$ count the number of paths correctly. The variable $x_n$ therefore represents the number of paths from vertex 1 to vertex $n$ in a graph with a secondary dummy source vertex such that $x_0=x_1$.\\

    The other constraints ensure that the resulting graph has the desired structure. Constraints~\eqref{eq:1} guarantee the presence of a Hamiltonian path going from $1$ to $n$ passing through all the other vertices in order, as shown in Theorem~\ref{th:Ham}. It also includes the connection of vertices 1 and $n$ with the dummy vertices, to ensure vertices 1 and $n$ are not connected to too many vertices in the graph. Constraints~\eqref{eq:2} count the number of edges incident to a vertex $i$, which has to be exactly 3 since the graph is 3-regular. Constraints~\eqref{eq:3} guarantee that all edges are oriented from source to sink. The combination of these constraints ensures that any solution of the model is a directed acyclic 3-regular graph. Finally, Constraints~\eqref{eq:5} ensure that the graph cannot be disconnected by removing only 2 edges. For this, we consider any block of consecutive vertices $i$ to $j$. For any such block, we enforce that at least 3 edges have an endpoint either smaller than $i$ or greater than $j$. In other words, it is impossible to disconnect the block of vertices by simply removing the two edges from the path $(1,2,\ldots,n)$, as shown in Proposition~\ref{3conn}.
\end{proof}

We point out that the model does not compute the maximal scaling of the number of paths for a sequence of $n$ vertices, only an upper bound. The dummy variable associated with $x_0=1$ is a worst-case scenario as it considers that \emph{all} outgoing vertices $u$ preceding the first $0$ (vertex $v_1$, starting the $n$ vertex block), and whose other endpoint are in our block of vertices, have $\mu(u)=\mu(v_1)$. This is not always the case, as in general we only have $\mu(u)\leq \mu(v_1)$. We now bound the number of paths for any graph of size $2n$ by grouping these blocks together.

\begin{proof}[Proof of Theorem~\ref{th:opt}]

The main part of the argument is to justify that we can decompose the $2n$ vertices into blocks of varying sizes, where the possible sizes are in some range $\{k,\ldots,k+5\}$. For each block, we can upper-bound the growth-rate of $\mu$ between the first and last vertex using the optimization model. We can then combine the different bounds multiplicatively over the different blocks to guarantee the overall bound.

We begin with the block decomposition. Firstly, by Lemma~\ref{lem:01}, we are guaranteed to see a vertex with two outgoing edges (a ``0'') at most every 3 vertices, and the first vertex is necessarily a $0$. Every time we encounter such a vertex, Model~\ref{mod:2.5conn} guarantees that the number of paths over the next $k$ steps increases at most by a certain factor $f(k)$. We are however not guaranteed to have a vertex with two incoming edges (a ``1'') followed by a $0$ exactly after $k$ steps, which would allow us to start the next block from this new $0$ vertex. Again using Lemma~\ref{lem:01}, we know that we only need to compute the worst possible growth for a small range of possible values: $k$ to $k+5$. Indeed, given that we cannot have three $0$'s or three $1$'s in a row, a ``$10$'' sequence is guaranteed at least every $5$ steps. 
For any 3-regular, 3-edge connected acyclic graph, starting from the first vertex, we can find a block of length $k+i$, with $0\leq i \leq 5$ such that the elements in positions $k+i$ and $k+i+1$ are respectively 1 and 0. Removing these first $k+i$ vertices and repeating this procedure, we obtain the desired block decomposition, with a final block of length less than $k$.

We now wish to bound the total growth. On each block starting in $a$, we know that $\mu(a+k+i)$ is upper-bounded by $\mu(a)f(k+i)$. Furthermore, by construction, vertex $a+k+i$ is an incoming vertex, and $a+k+i+1$ an outgoing vertex, they must share the same number of paths from the source by the Hamiltonian path construction. We therefore have that $\mu(a+k+i+1)\leq \mu(a)f(k+i)$.

Using Lemma~\ref{lemma:correct}, we obtain the maximal number of paths for the different values of $k+i$, described in Table~\ref{tab:growth}. For each of these, we know that this increase required using $k+1$ edges on the Hamiltonian path (the $k$ of the block, plus the final edge for the next $0$). The growth rate per vertex therefore corresponds to the $1/k$-th power of the maximal number of paths. Multiplying these successive growth rates together gives a bound of $(\max_{k=35,\ldots,40}g(k))^{2n-d}$ for the number of paths to go from the source to the $(2n-d)^{\mathrm{th}}$ vertex, where $d<35$ (before the final block). This last block has length less than 35, it can only increase the total number of paths by some factor $c$ at most. For each of the previous blocks, as shown in Table~\ref{tab:growth}, the worst case corresponds to exclusively blocks of length $36$, giving us a bound of $c \cdot 1.6779^{n-\frac{d}{2}}$, upper-bounded by $c\cdot1.6779^n$. To extend the result from the graphs in question to 3-regular graphs, we remark that the number of paths can only increase by a constant factor.
\begin{table}[h]
    \centering
    \begin{tabular}{|c|c|c|c|c|c|c|}
    \hline
        Number of vertices $k$ & 35 &36 &37 &38& 39&40 \\
        \hline
        Maximal number of paths & $8\,233$& $11\,117$& $14\,033$& $17\,293$&$22\,781$& $28\,726$\\
        \hline
        Squared growth factor $g(k)^2$& 1.6740& 1.6779 & 1.6756& 1.6713&1.6729& 1.6707\\
        \hline
    \end{tabular}
    \caption{Growth rate of the number of paths depending on block size.}
    \label{tab:growth}
\end{table}
\end{proof}

We wish to point out two elements from this proof. First, while the constant $c$ may seem very large as it corresponds to the maximal number of paths for a block of length less than 35, it is largely a trade-off with the crude upper-bound where we ignore $1.6779^{-\frac{d}{2}}$ in the final step.  Secondly, from the table above and Figure~\ref{fig:growth}, it is clear that better results could be obtained by considering larger blocks. This was out of reach of our experiments on a single laptop, but the model remains correct for larger values of $n$ and the bound in Theorem~\ref{th:opt} could naturally be improved. We note that for $k=21$, the growth per vertex is $1.7108$ and, while not monotonically decreasing, the trend between $k=21$ and $k=40$ is clearly indicating a smaller constant when larger blocks are considered.
\begin{figure}[h]
    \centering\includegraphics[width=0.5\linewidth]{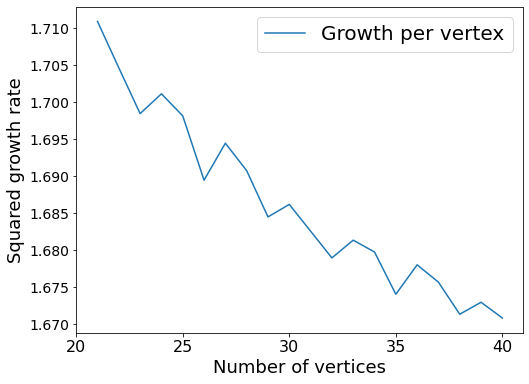}
    \caption{Evolution of the growth factor per vertex based on the choice of $k$.}
    \label{fig:growth}
\end{figure}

\section{Future Work}\label{sec:future}

This work highlights that for $3$-regular graphs obeying $\ell$-edge connectivity, there is a naturally associated family of directed graphs that maximize the source-to-sink path count. Therefore, this begs the question of finding bounds for general $k$-regular and $\ell$-edge connected graphs. Furthermore, for $k=\ell=3$, as noted in Figure 7 of Athanasiadis-De Loera-Zhang~\cite{Loera2022}, we see that the wedge of an ($n+1$)-gon over  an edge meets the conjectured bound. In general, does there exist a simple $d$-polytope whose graph maximizes the source-to-sink path count among all $d$-regular, $d$-edge connected acyclic directed graphs with a unique source and a unique sink? 

\begin{question}
    What is the maximum number of source-to-sink paths over all $k$-regular, $\ell$-edge connected acyclic directed graphs on $n$ vertices? What is this count over all such simple graphs? In both cases, can we characterize the extremal examples? 
\end{question}

\section{Acknowledgements}
The authors are very grateful to Stefan Steinerberger for the fruitful discussions and general advice. DG thanks Joshua Hinman for introducing him to the problem and for illustrative preliminary discussions, Isabella Novik for sage advice, helpful suggestions and expository guidance, and Germain Poullot for encouragement and helping build intuition for the problem.
\bibliographystyle{alpha}
\bibliography{biblio}
\end{document}